\documentclass[a4paper]{article}
\usepackage{amsthm}
\usepackage{amsmath}
\usepackage{amsfonts}
\usepackage{amssymb,a4wide}
\usepackage{graphicx}

\newtheorem{theorem}{\textbf{Theorem}}
\newtheorem{lemma}[theorem]{\textbf{Lemma}}

\newtheorem{definition}{\textbf{Definition}}

\newtheorem{remark}[theorem]{\textbf{Remark}}
\newtheorem{problem}{\textbf{Problem}}

\newcommand{\btree}{$\beta(1,0)$\mbox{-tree}}

\DeclareMathOperator{\roots}{\mathrm{root}}
\DeclareMathOperator{\rsub}{\mathrm{rsub}}
\DeclareMathOperator{\sub}{\mathrm{sub}}
\DeclareMathOperator{\rpath}{\mathrm{rpath}}

\title{Enumeration of fixed points of an involution on $\beta(1,0)$-trees}
\author{Sergey Kitaev\footnote{Department of Computer and Information Sciences, University of Strathclyde, Glasgow, United Kingdom. Email: sergey.kitaev@cis.strath.ac.uk}\ \ and Anna de Mier\footnote{ Departament de Matem\`atica Aplicada 2, Universitat Polit\`{e}cnica de Catalunya, Barcelona, Spain. Email: anna.de.mier@upc.edu.}}

\begin{document}
\maketitle

\begin{abstract} $\beta(1,0)$-trees provide a convenient description of rooted non-separable planar maps. The involution $h$ on $\beta(1,0)$-trees was introduced to prove a complicated equidistribution result on a class of pattern-avoiding permutations. In this paper, we describe and enumerate fixed points of the involution $h$. Intriguingly, the fixed points are equinumerous with the fixed points under taking the dual map on rooted non-separable planar maps, even though the fixed points do not go to each other under the know (natural) bijection between the trees and the maps.\end{abstract}

\section{Introduction}

A special case of so-called {\em description trees} introduced in \cite{CJS} to describe several classes of {\em planar maps} can be defined as follows.

\begin{definition} A \emph{\btree} is a rooted
plane tree labeled with positive integers such that
\begin{enumerate}
\item Leaves have label $1$.
\item The root has label equal to the sum of its children's labels.
\item Any other node has an integer label between $1$ and the sum of its
  children's labels.
\end{enumerate}
\end{definition}

For example, all $\beta(1,0)$-trees on 3 edges are presented in Figure~\ref{beta10}.

\begin{figure}[h]
\begin{center}
\includegraphics[scale=0.5]{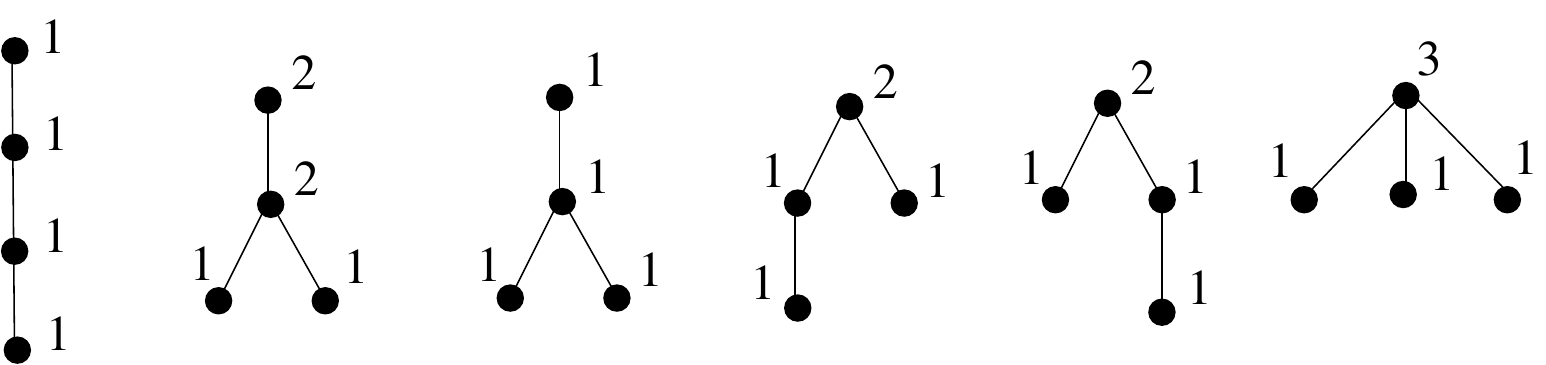}
\caption{All $\beta(1,0)$-trees on 4 nodes.}\label{beta10}
\end{center}
\end{figure}

It turns out that $\beta(1,0)$-trees are in one-to-one correspondence with {\em rooted non-separable planar maps} (see Section \ref{sec-RNPM} for details), thus providing a useful tool to work with the maps (see \cite{CS,KSSU}). Moreover, they are also in bijection with 2-{\em stack sortable permutations} and permutations avoiding simultaneously the patterns 3142 and 2\underline{41}3 (see \cite{Kit} for a comprehensive overview over the field of permutation patterns and for definitions of the mentioned objects).

The involution $h$ on $\beta(1,0)$-trees (to be reviewed in Section \ref{sec2}) was introduced in \cite{CKS1} in order to prove a complicated equidistribution result on (3142,2\underline{41}3)-avoiding permutations. A natural question to  ask is what can be said about the fixed points of $h$. Can we describe their structure? Can we enumerate them? Can we link them to other (combinatorial) objects?

In this paper we will address these questions. The paper is organized as follows. In Section \ref{sec2} we not only define the involution $h$, but also provide a sketch of a proof (originally appearing in \cite{CKS2}) that this map is indeed an involution. The structure of the fixed points is then described in Section  \ref{sec-Structure}, and they are enumerated in Section \ref{sec-Enum}. Section \ref{sec-RNPM} links our studies to the fixed points under the duality map  on rooted non-separable planar maps studied in \cite{KMN}; three open problems are raised in that section. More (bijective) open problems can be found in Section \ref{more}. 

\section{The involution $h$ on $\beta(1,0)$-trees}\label{sec2}

To proceed, we need to define several statistics on $\beta(1,0)$-trees. These are given in Table~\ref{stats10trees}. For example, for the $\beta(1,0)$-tree $T$ in Figure~\ref{beta10tree}, the values of the statistics are as follows: $\roots(T)=5$, $\sub(T)=3$,
$\rpath(T)=3$, and $\rsub(T)=2$. For another example, the second tree from left to right in Figure~\ref{beta10} has
$\roots(T)=2$, $\sub(T)=1$,
$\rpath(T)=2$, and $\rsub(T)=1$.

\begin{table}
\begin{center}
\begin{tabular}{l|l}
Statistic & Description in a $\beta(1,0)$-tree $T$\\
\hline
$\roots(T)$ & root's label\\
\hline
$\sub(T)$ & \# children of the root\\ & = \# subtrees coming out from the root \\
\hline
$\rpath(T)$ & \# edges from the root to the rightmost leaf \\
& = length of the rightmost path (right-path)\\
\hline
$\rsub(T)$ & \# 1s below the root on the right-path
\end{tabular}
\caption{Statistics on $\beta(1,0)$-trees as described in~\cite{CKS1}.}\label{stats10trees}
\end{center}
\end{table}

\begin{figure}[h]
\begin{center}
\includegraphics[scale=0.5]{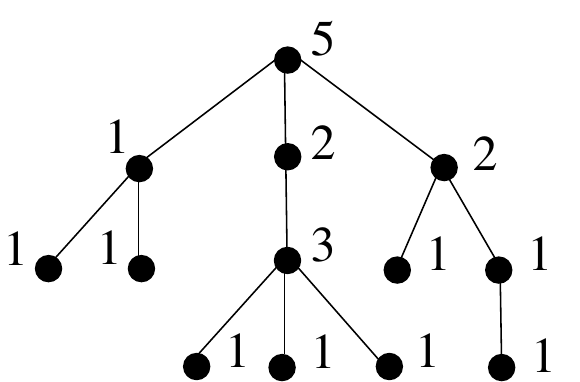}
\caption{A $\beta(1,0)$-tree.}\label{beta10tree}
\end{center}
\end{figure}

\begin{definition}
A $\beta(1,0)$-tree $T$ on at least two nodes is {\em indecomposable} if $\sub(T)=1$, that is, if the root of $T$ has exactly one child; otherwise, $T$ is {\em decomposable}.  A $\beta(1,0)$-tree $T$ on at least two nodes is {\em right-indecomposable} if $\rsub(T)=1$, that is, if the right-path has exactly one $1$ below the root; otherwise, $T$ is {\em right-decomposable}.
\end{definition}

The idea of the involution $h$ on $\beta(1,0)$-trees, defined in~\cite{CKS1}, is to turn $\beta(1,0)$-tree decompositions into right-decompositions, and vice versa. We define $h$ recursively (see a schematic description in Figure~\ref{involutionH}).
As the base case, we map the single node tree and the one edge tree to themselves. We also assume inductively that if $\mathrm{root}(A)=x$ then $\mathrm{rpath}(h(A))=x$ (except if $A$ has only one node).  In the case of an indecomposable tree, we remove the top edge to get the $\beta(1,0)$-tree $A$ (whose root may need to be adjusted), apply $h$ recursively to get $h(A)$,  add a new leaf to $h(A)$ so that the statistic $\mathrm{rpath}$ of the result equals $\mathrm{root}(A)$, and finally increase all  labels above this new rightmost leaf by 1. On the other hand, if the tree is decomposable, let $A$ be the tree induced by the root and all its subtrees but the rightmost one, and let $B$ be the tree induced by the root and its rightmost subtree  (again, adjusting the root labels if necessary). Then identify the rightmost leaf of $h(B)$ with the root of $h(A)$, this identified node keeping the  label $1$ of the leaf. 
 See Figure~\ref{invH} for an example of applying the involution $h$ together with some of the steps involved in the recursive procedure.

Actually, it is not only the case that $\mathrm{root}(A)=\mathrm{rpath}(h(A))$, but, as shown in \cite{CKS1},  under $h$ one can control 8 (mostly natural) statistics on $\beta(1,0)$-trees. However, for us it is enough to consider four of these statistics, which are mentioned in the following theorem.

\begin{theorem}[\cite{CKS1}]\label{niceProperty} If $S=h(T)$ then $\roots(T)=\rpath(S)$, $\roots(S)=\rpath(T)$, $\sub(T)=\rsub(S)$ and $\sub(S)=\rsub(T)$.\end{theorem}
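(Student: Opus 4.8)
The plan is to prove all four identities simultaneously by structural induction on the number of nodes of $T$, following exactly the recursive case split in the definition of $h$. The base cases (single node, one edge) are immediate since $h$ fixes these trees and one checks the four statistics directly. For the inductive step I would distinguish the two cases of the definition—$T$ indecomposable versus $T$ decomposable—and in each case track what happens to $\roots$, $\rpath$, $\sub$, and $\rsub$ under the prescribed operation, invoking the inductive hypothesis on the smaller trees produced.

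Consider first the indecomposable case, where $\sub(T)=1$. Here $T$ is obtained from a smaller tree $A$ by attaching a top edge, so $\roots(T)=\roots(A)$ after the root adjustment, and $\sub(T)=1$ by assumption. The construction of $h(T)$ adds a new rightmost leaf to $h(A)$ so that $\rpath(h(T))=\roots(A)=\roots(T)$, which gives the first identity $\roots(T)=\rpath(S)$ directly from the design of the map. For the dual identity $\roots(S)=\rpath(T)$, I would note that adding the new leaf and incrementing the labels above it raises the root label of $h(A)$ by one, while removing the top edge of $T$ lowers $\rpath(T)$ by one relative to $\rpath$ of the subtree; the inductive hypothesis $\roots(h(A))=\rpath(A)$ then closes the gap. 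The statistic $\rsub(h(T))$ equals $1$ because the newly attached rightmost leaf sits alone below the root on the right-path, matching $\sub(T)=1$; and $\sub(S)=\rsub(T)$ follows because attaching the top edge does not change the number of root-children of $h(A)$, so $\sub(h(T))=\sub(h(A))=\rsub(A)=\rsub(T)$ by induction.

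In the decomposable case, $T$ splits into $A$ (root plus all but the rightmost subtree) and $B$ (root plus rightmost subtree), and $h(T)$ is formed by identifying the rightmost leaf of $h(B)$ with the root of $h(A)$. Here the key bookkeeping is that $\roots(T)=\roots(A)+\roots(B)-\roots(\text{shared})$ decomposes additively across the split, while the gluing operation concatenates right-paths, so $\rpath(S)$ becomes a sum of contributions from $h(A)$ and $h(B)$. I would show $\rpath(h(T))=\rpath(h(A))+\rpath(h(B))-1=\roots(A)+\roots(B)-1=\roots(T)$ using the inductive hypotheses on both pieces and the fact that the identified node is counted once. Symmetrically, $\rsub(h(T))$ counts the $1$s on the concatenated right-path, which adds those from $h(A)$ and $h(B)$; matching this against $\sub(T)$, the number of root-children, requires seeing that the subtrees of $T$'s root distribute correctly between $A$ and $B$.

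The main obstacle I anticipate is the careful treatment of the root-label adjustments and the $\pm 1$ corrections where paths and subtrees are glued or the top edge is removed: the identified node in the decomposable case, and the newly added leaf with its label increments in the indecomposable case, are exactly where off-by-one errors creep in. To manage this I would state precise conventions for how the shared node is counted in each of the four statistics, and I would lean heavily on the established invariant $\roots(A)=\rpath(h(A))$ to convert between root-labels and right-path lengths at each recursive boundary. Since all four identities are structurally dual in pairs—swapping the roles of $(\roots,\sub)$ with $(\rpath,\rsub)$ and using $h=h^{-1}$—I would verify one of each dual pair in detail and then obtain its partner by applying the already-proven identity to $h(T)$ in place of $T$, which roughly halves the case analysis.
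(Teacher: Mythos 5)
The paper itself does not prove this theorem---it cites \cite{CKS1}---so your direct induction following the two cases in the definition of $h$ is the natural route, and its overall architecture is sound. However, several of your intermediate claims are false as stated and reach the right conclusion only because the errors cancel. In the decomposable case, $\rpath$ counts \emph{edges}, so concatenating the right-path of $h(B)$ with that of $h(A)$ at the identified node gives $\rpath(h(T))=\rpath(h(A))+\rpath(h(B))$ with no $-1$; correspondingly $\roots(T)=\roots(A)+\roots(B)$ exactly, since the children of the root, and hence their labels, are partitioned between $A$ and $B$ and the shared root contributes no correction term. Your two spurious $-1$'s cancel, but both displayed equalities are wrong as written. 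In the indecomposable case, your claim that attaching the new leaf ``does not change the number of root-children of $h(A)$'' fails precisely when $b=\roots(T)=1$: then the new leaf is attached directly to the root of $h(A)$, so $\sub(h(T))=\sub(h(A))+1$; the identity survives only because in that case the child of $T$'s root has label $1$, whence $\rsub(T)=\rsub(A)+1$ as well. You need to separate $b=1$ from $b\geq 2$. Relatedly, $\roots(T)$ equals the label of the root's child \emph{before} the adjustment that turns its subtree into the standalone tree $A$, so in general $\roots(T)\leq\roots(A)$ with equality not guaranteed; the identity $\rpath(S)=\roots(T)$ holds because the new leaf is placed at depth $\roots(T)$ by construction, not via ``$\roots(T)=\roots(A)$''.

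A more structural concern: your plan to obtain each identity's partner ``by applying the already-proven identity to $h(T)$'' presupposes $h^2=\mathrm{id}$, which is Theorem~\ref{h-is-involution}, proved later and not by you; as it stands this shortcut is circular. Since you are already running an induction, the clean fix is to carry all four identities as the inductive hypothesis and verify all four in each case. The two you omit in the decomposable case are in fact immediate: gluing at the rightmost leaf of $h(B)$ changes neither the root label nor the set of root-children of $h(B)$, and the right-path of $T$ is that of $B$, so $\roots(h(T))=\roots(h(B))=\rpath(B)=\rpath(T)$ and $\sub(h(T))=\sub(h(B))=\rsub(B)=\rsub(T)$.
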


\begin{figure}[h]
\begin{center}
\includegraphics[scale=0.45]{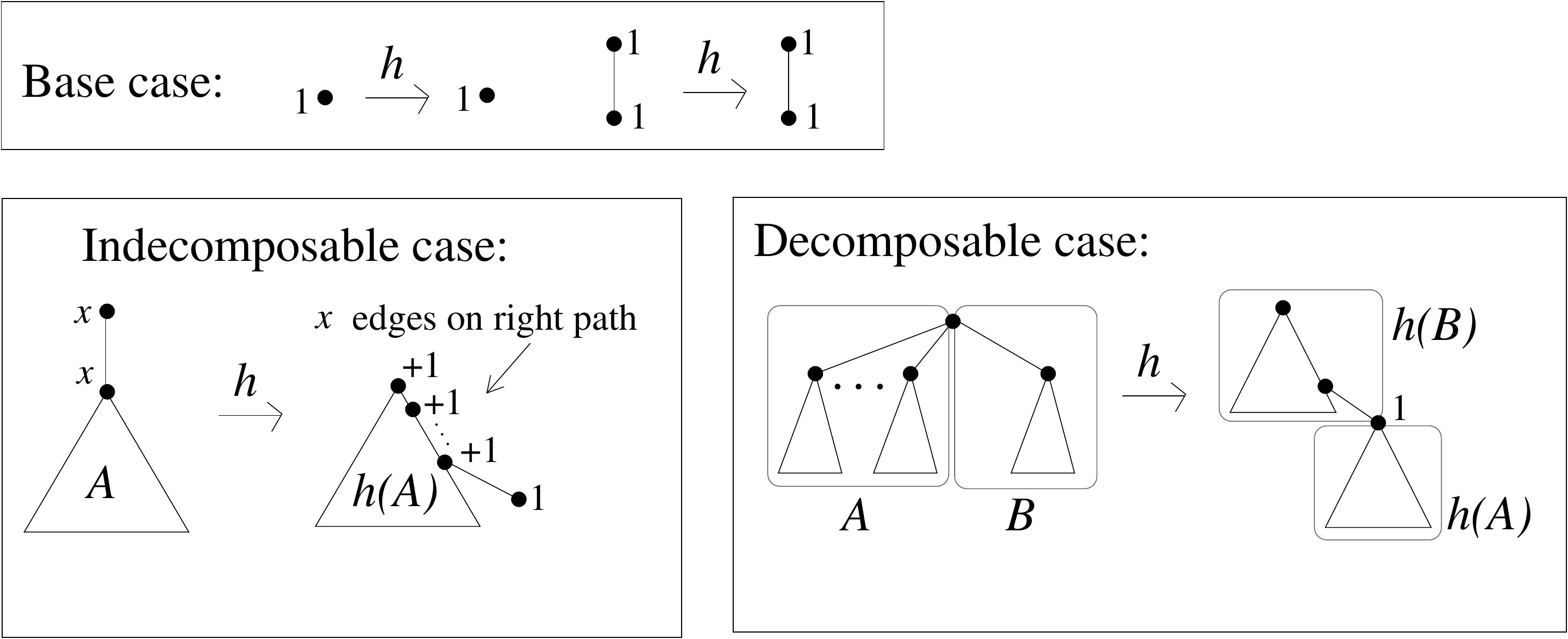}
\caption{A schematic description of the involution $h$. In the indecomposable case, a ``$+1$'' next to a node has to be interpreted as increasing the label of that node by $1$.}\label{involutionH}
\end{center}
\end{figure}

\begin{figure}[h]
\begin{center}
\includegraphics[scale=0.45]{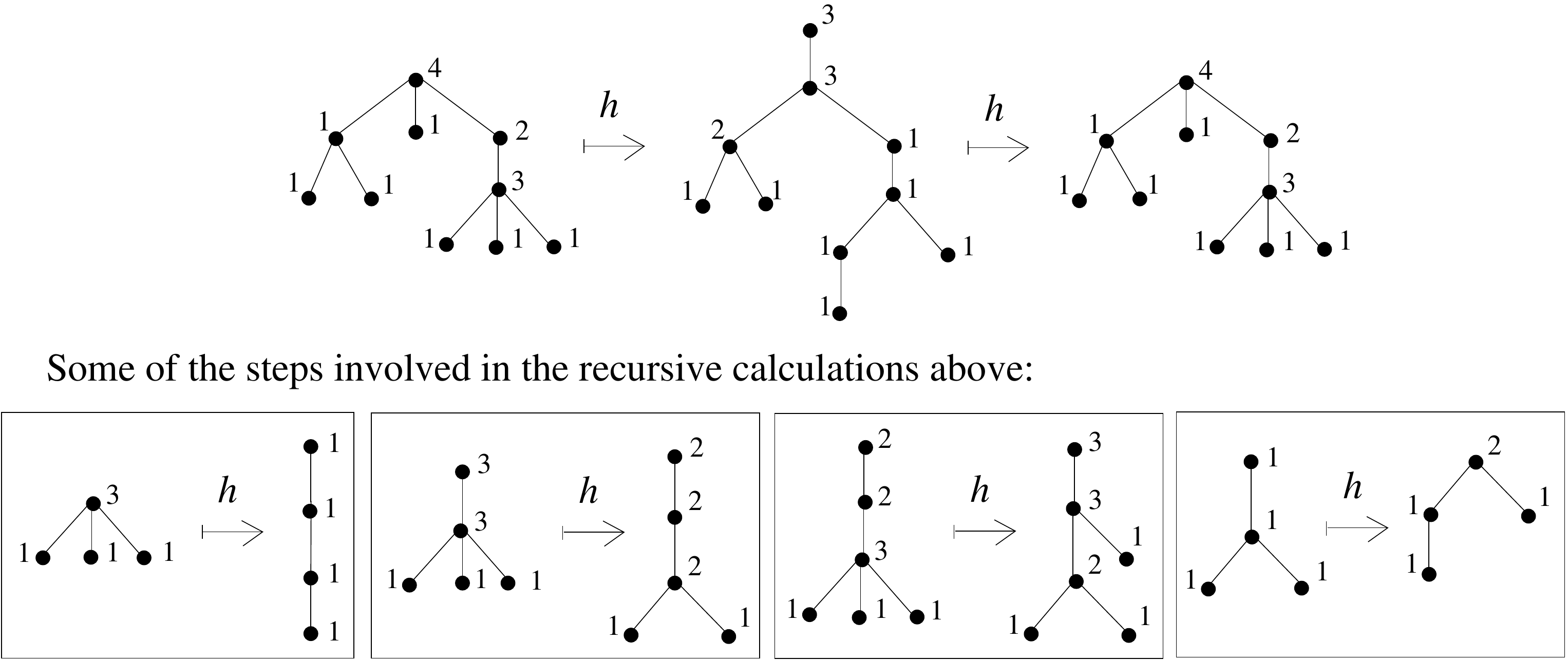}
\caption{An example of applying the involution $h$ together with some of the steps involved in the recursive procedure.}\label{invH}
\end{center}
\end{figure}

It was mentioned in \cite{CKS1} that  specializing $h$ on those $\beta(1,0)$-trees whose non-root nodes are all labelled 1 gives an interesting involution on structures counted by {\em Catalan numbers}, which provides an extra motivation to study $h$. Even though $h$ was defined and used in \cite{CKS1}, a proof that $h$ is actually an involution has not appeared until it was presented in a formal way in \cite{CKS2}. In either case, below we provide a sketch of the proof that involves a picture; this gives an intuitive idea on the non-formal proof that the authors of \cite{CKS1} originally came up with, and it is helpful for better understanding the structure of fixed points we provide below. 

Let us before make some comments on the  figures that appear in the proof that $h$ is an involution in Theorem \ref{h-is-involution} below and in rest of the paper.  Whenever a subtree is labelled by a capital letter, say $A$, it has to be understood that $A$ is the $\beta(1,0)$-tree with the same nodes and edges as the subtree, and with the same labelling, except possibly for the root node. Similarly, a subtree labelled $h(A)$ agrees with the image of $A$ under $h$, except perhaps at the root label. As in Figure~\ref{involutionH}, a ``$+1$'' next to a node means that the original label of that node goes up by $1$, and a ``$>1$'' means that the label of that node is greater than $1$. Also, when several nodes have the mark ``$>1$'' along a path, it has to be understood as a (possibly empty) sequence  of nodes with labels greater than~$1$.

\begin{theorem}\label{h-is-involution}(\cite{CKS2}) The map $h$ is an involution, that is, $h^2(T)=T$. \end{theorem}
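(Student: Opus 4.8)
The plan is to prove that $h$ is an involution by structural induction on the number of nodes of $T$, following exactly the recursive dichotomy (indecomposable versus decomposable) used to define $h$. The base cases are the single-node tree and the one-edge tree, both of which $h$ fixes, so there is nothing to check there. For the inductive step I will assume that $h^2(A)=A$ for every \btree{} $A$ with fewer nodes than $T$, together with the inductive hypothesis built into the definition, namely that $\roots(A)=\rpath(h(A))$. The crux is to verify that the two construction rules are genuinely inverse to each other: applying $h$ to an indecomposable tree produces a right-decomposable tree, and applying $h$ to that right-decomposable tree must recover the original indecomposable tree, and symmetrically with the roles of decomposable and right-indecomposable interchanged.

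First I would treat the indecomposable case. Here $T$ is obtained from a smaller tree $A$ by attaching a top edge, so $\sub(T)=1$. By the definition, $S=h(T)$ is built from $h(A)$ by adjoining a new rightmost leaf tuned so that $\rpath(S)=\roots(A)$ and then incrementing the labels strictly above that leaf; the effect is that $S$ has $\rsub(S)=1$, i.e.\ $S$ is right-indecomposable. The key point to establish is that the decomposable rule applied to $S$ peels off precisely this last leaf: the tree induced by the root and all subtrees but the rightmost recovers $h(A)$ (after the $+1$'s are undone), while the rightmost subtree is the trivial one-edge piece. Applying $h$ to $h(A)$ and invoking the inductive hypothesis $h^2(A)=A$ returns $A$, and reattaching the top edge reconstructs $T$. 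The symmetric situation, in which $T$ is decomposable and $S=h(T)$ is right-indecomposable, is handled by running the same argument with indecomposable and right-indecomposable swapped and with the subtrees $A$ and $B$ of the decomposition playing dual roles; here one uses the inductive hypothesis on both $A$ and $B$.

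The main obstacle, and the step I would spend the most care on, is the bookkeeping of the label adjustments — the ``root may need to be adjusted'' clauses, the ``$+1$'' increments above the new leaf, and the shared node of label $1$ where $h(B)$ and $h(A)$ are identified. I must confirm that these operations are exactly inverse: that the $+1$ applied when going from $h(A)$ to $S$ is undone when the decomposable rule strips the leaf and re-examines $S$, and that the root-label corrections commute correctly with the peeling operations so that the recovered subtrees carry the right labels. The statistics in Theorem~\ref{niceProperty}, especially $\roots(T)=\rpath(S)$ and $\sub(T)=\rsub(S)$, are the tool that makes this tractable: they guarantee that $S$ falls into the opposite class (decomposable versus right-indecomposable) from $T$, so the correct inverse rule is triggered, and they pin down exactly how many leaves or subtrees must be removed to reverse the construction.

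Since a fully formal verification of every label is lengthy, the intended presentation is a picture-driven sketch, as the excerpt announces: I would draw the two cases schematically, showing $T$, the intermediate object $h(A)$ (or $h(A),h(B)$), and $S=h(T)$ side by side with the ``$+1$'' and ``$>1$'' marks, and then trace the reverse application of $h$ on the figure to display visually that the marks cancel and the original tree reappears. This reduces the proof to a finite case check that is convincing from the diagram once the inductive hypothesis supplies $h^2(A)=A$ on the strictly smaller pieces.
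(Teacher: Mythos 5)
Your overall strategy (induction on the number of nodes, using the interplay between the decomposable and right-decomposable cases) is the same as the paper's, but there is a genuine gap in the key step. Which of the two defining rules is used to compute $h(S)$ for $S=h(T)$ is governed by $\sub(S)$, and by Theorem~\ref{niceProperty} $\sub(S)=\rsub(T)$ --- a quantity that is independent of whether $T$ itself is indecomposable or decomposable. So the two construction rules are not ``mutually inverse'' in the direct sense you describe. Concretely, in your indecomposable case the new rightmost leaf of $S$ is attached at depth $\roots(A)$ down the right path; it is a child of the root of $S$ only when that depth is $1$, and even then the decomposable rule splits off the \emph{rightmost subtree of the root} of $S$, which in general is a large tree rather than ``the trivial one-edge piece.'' The claim that this rule ``peels off precisely this last leaf'' and recovers $h(A)$ therefore fails. (You also have the classes reversed in your opening summary: $h$ sends indecomposable trees to right-\emph{in}decomposable ones, since $\sub(T)=\rsub(h(T))$, and decomposable trees to right-decomposable ones.)

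What is missing is a strengthening of the induction hypothesis. The paper proves, by a parallel induction, the property of Figure~\ref{case-to-prove}: if $T$ has right-decomposition into pieces $A_1,\ldots,A_k$ obtained by cutting the right path at its $k=\rsub(T)$ nodes labelled $1$, then $h(T)$ is the tree whose root carries the $k$ subtrees $h(A_i)$ (with the appropriate label adjustments). Only with this stronger statement in hand can one apply $h$ to $h(T)$ --- now via $k-1$ uses of the decomposable rule together with the base case $k=1$ --- and invoke $h^2(A_i)=A_i$ on the strictly smaller pieces to recover $T$. Your single-step case analysis, which tracks only whether $T$ is indecomposable or decomposable, does not supply enough information about the shape of $h(T)$ to carry out the second application of $h$, so the induction does not close as written.
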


\begin{proof} 

As mentioned above, we provide a sketch of a proof. The proof is based on induction on the size of a tree with the obvious base case, the one node tree going to itself.  

We would like to prove first a property of $h$ that is shown schematically in Figure \ref{case-to-prove}, where a given $\beta(1,0)$-tree $T$ is represented using the right-decomposition, while $h(T)$ is represented using (the usual) decomposition. Observe that the case $k=1$ follows readily from the definition of $h$ (recall Figure~\ref{involutionH}). Once this property is proved, we can apply $h$ to $h(T)$ in Figure \ref{case-to-prove} using the definition of $h$, and then apply the induction hypothesis (saying that $h^2(T)=T$ for smaller trees) to get the original tree $T$. The property in Figure \ref{case-to-prove} is also to be proved by (parallel) induction on the size of the tree.

\begin{figure}[h]
\begin{center}
\includegraphics[scale=0.4]{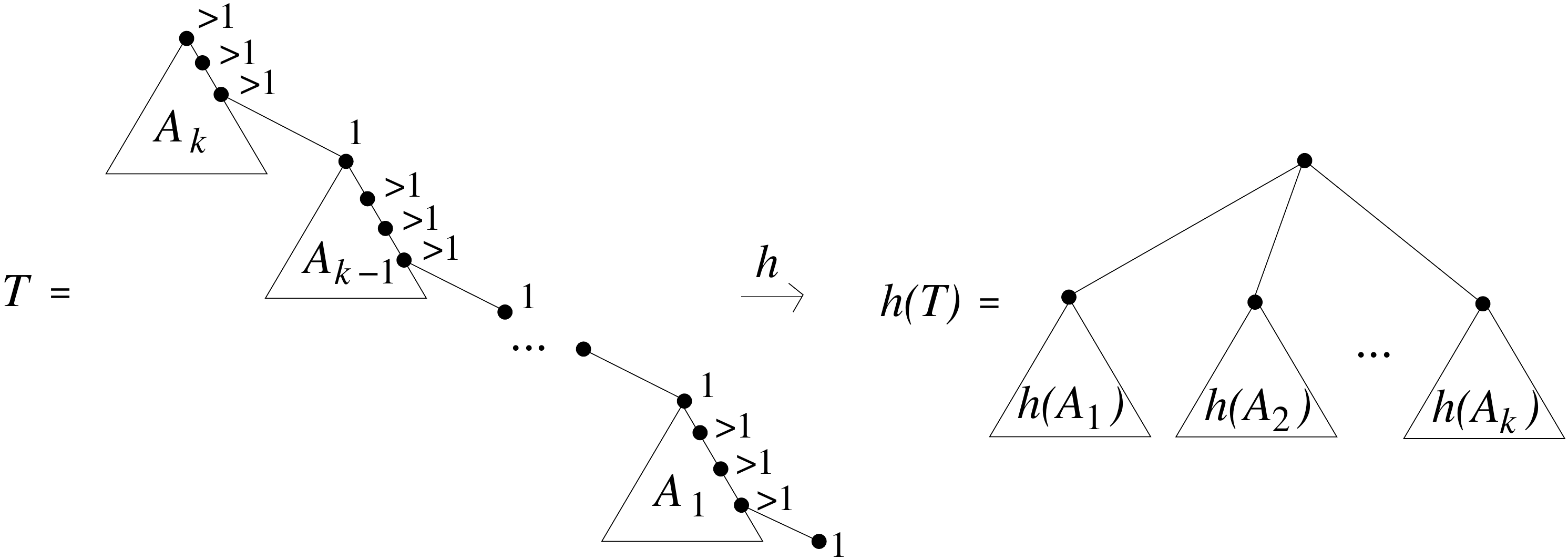}
\caption{A desired property of $h$ implying immediately that $h$ is an involution.}\label{case-to-prove}
\end{center}
\end{figure}

To prove the property in Figure \ref{case-to-prove}, we begin with decomposing the topmost tree $A_k$, as shown by the first equality in Figure \ref{case-proof} (the figure shows the case where $S_m$ is non-empty; the case it is empty follows in the same way). We can then use the definition of $h$, resulting in the second equality in  Figure \ref{case-proof}. Now we can apply our induction hypothesis on the property in Figure \ref{case-to-prove} to the smaller tree with right-decomposition based on the trees $S_m$, $A_{k-1}$, $A_{k-2}$, $\ldots$, $A_1$, to obtain the $\beta(1,0)$-tree on the left of Figure \ref{case-proof-2}. Since the rightmost subtree of the root in this tree is nothing else but $h(A_k)$, this gives us the desired result.

\begin{figure}[h]

\begin{center}
\includegraphics[scale=0.4]{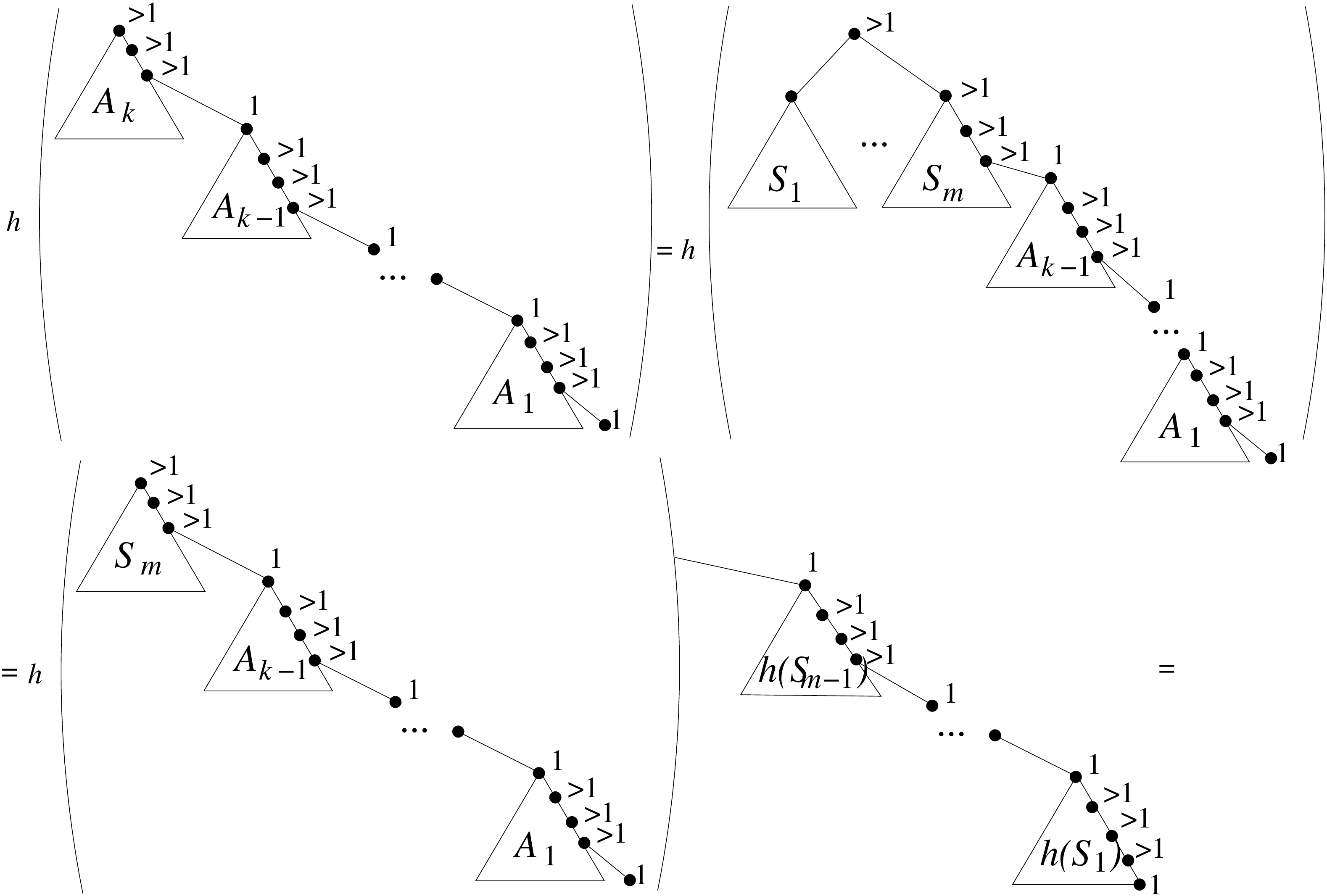}
\caption{Proving the property in Figure \ref{case-to-prove}; continuation is in Figure \ref{case-proof-2}.}\label{case-proof}
\end{center}
\end{figure}

\begin{figure}[!h]
\begin{center}
\includegraphics[scale=0.4]{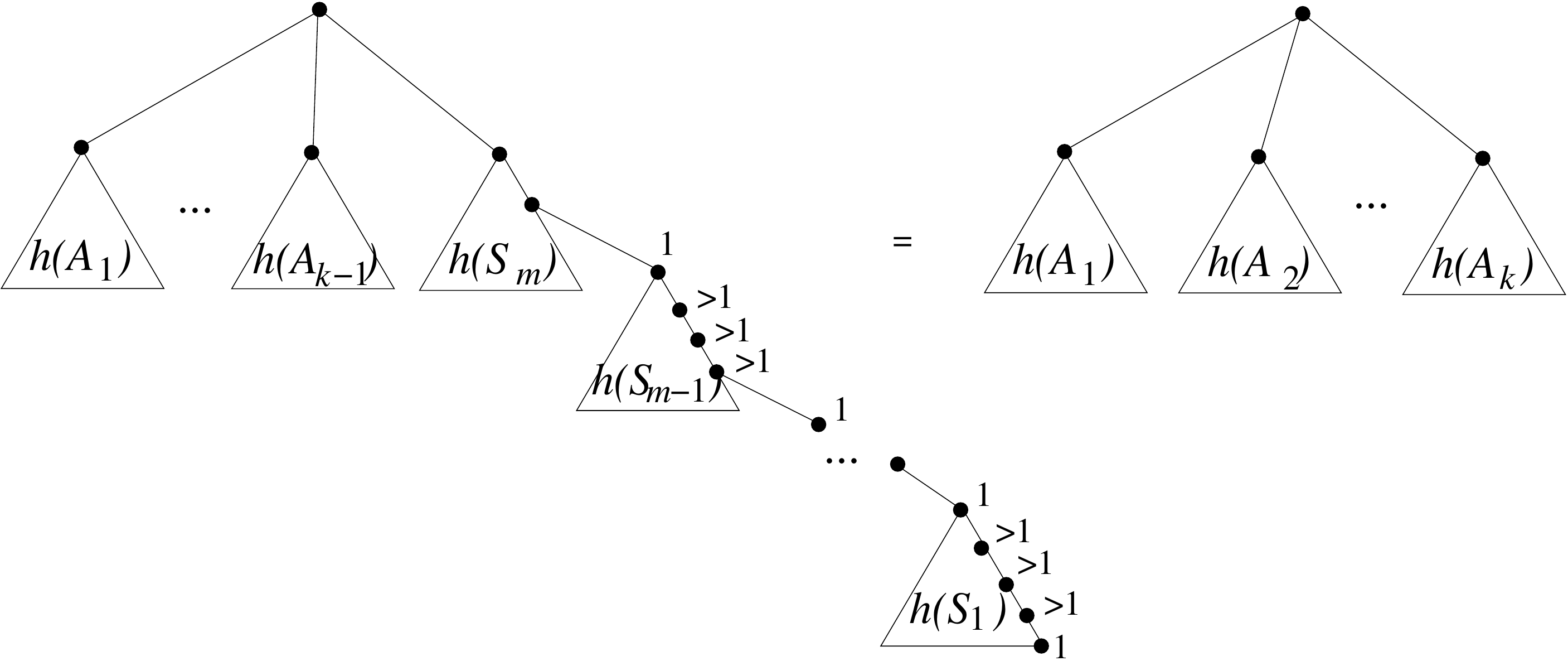}
\caption{Proving the property in Figure \ref{case-to-prove}; continuation of Figure \ref{case-proof}.}\label{case-proof-2}
\end{center}
\end{figure}

\end{proof}

\begin{remark} When defining $h$ in the indecomposable case (see Figure \ref{involutionH}) we can say that the root of the original tree goes to the rightmost leaf of the image tree. Thus, recursively, for each node in a $\beta(1,0)$-tree, there is a corresponding node in $h(T)$.  It can be shown \cite{CKS2} that under $h^2$, each node goes to itself.  \end{remark}

\newpage

\section{Structure of fixed points of $h$}\label{sec-Structure}

All fixed points of the involution $h$ on at most 6 nodes are depicted in 
Figure~\ref{small-fixed}.

\begin{figure}[h]
\begin{center}
\includegraphics[scale=0.5]{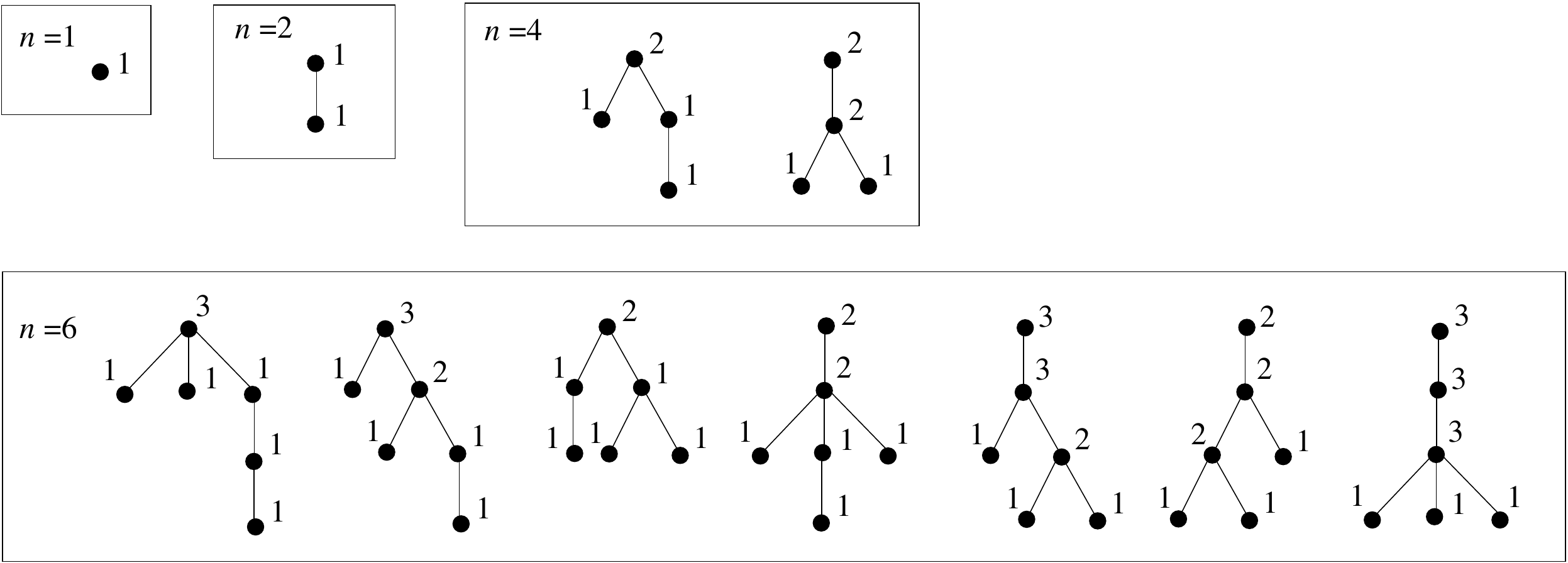}
\caption{All fixed points of $h$ on at most 6 nodes.}\label{small-fixed}
\end{center}
\end{figure}

The number of fixed points for $n=2,4,6,\ldots$ is $1, 2, 7, 30, 143, 728, 3876,\ldots$, which, as we shall see in the next section, is sequence A006013 in OEIS \cite{OEIS}. 

The goal of this section is to prove the following theorem on the structure of the fixed points of $h$ (see Figure~\ref{fixed-structure}).

\begin{theorem}\label{main-structure} If $T$ is a fixed point under $h$, then $T$ has (exactly) one of the following structures:
\begin{itemize}
\item[(F0)] $T$ is a node.
\item[(F1)] $T$ is based on an arbitrary $\beta(1,0)$-tree $A$. 
Change the root of $h(A)$ to $1$ (unless it was $1$ already)  and hang the result from the root of $A$ through a new edge to the right. $T$ is the resulting tree with the root label suitably adjusted.
\item[(F2)] $T$ is based on  a triple $(A_1,A_2,b)$, where $A_1$ is an arbitrary $\beta(1,0)$-tree, $b$ is an integer larger than $1$, and $A_2$ is a fixed point of $h$ with at least two nodes and such that $\mathrm{root}(A_2)\geq b-1$ (and thus, by Theorem \ref{niceProperty}, 
$\mathrm{rpath}(A_2)\geq b-1$). 
 Hang $h(A_1)$ through a new edge to the right from the $(b-1)$-th node on the rightmost path of $A_2$; in the rightmost path of the result, add $1$ to every non-root node from the $(b-1)$-st node upwards (if any) and set the label of the $b$-th node to $1$. Change the root label to $b$ and let $A'_2$ be the result of these operations; finally, hang $A'_2$ from the root of $A_1$ to the right through a new edge, and  rewrite the root as necessary.


\end{itemize}
In particular, except for the one node tree, there are no fixed points of $h$ on an odd number of nodes.
\end{theorem}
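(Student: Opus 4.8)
The plan is to characterize fixed points recursively by exploiting the key interplay recorded in Theorem~\ref{niceProperty}: if $T=h(T)$, then the four statistics pair up as $\roots(T)=\rpath(T)$ and $\sub(T)=\rsub(T)$. In particular, a fixed point must be ``self-dual'' with respect to the decomposition/right-decomposition dichotomy. The first step is to dispose of the small cases: the single node is (F0), and I would check directly that any fixed point on at least two nodes either is indecomposable with $\rsub(T)=1$ (leading to structure (F1)) or is decomposable with $\rsub(T)=\sub(T)\geq 2$ (leading to structure (F2)). The governing observation is that $\sub(T)=\rsub(T)$ forces the number of subtrees at the root to equal the number of $1$s on the rightmost path, and this rigidity is what channels a fixed point into exactly one of the three shapes.

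First I would analyze the indecomposable case, $\sub(T)=1$, which by the fixed-point condition forces $\rsub(T)=1$ as well, so $T$ is also right-indecomposable. Here I would peel off the top edge as in the definition of $h$ to expose a subtree, and argue that the requirement $h(T)=T$ translates precisely into the recipe of (F1): the subtree hanging to the right must be $h(A)$ with its root reset to $1$, for some arbitrary $\beta(1,0)$-tree $A$, because applying $h$ to a tree of this shape reproduces it. The content here is matching the ``add a rightmost leaf and increment labels above it'' operation in the indecomposable clause of $h$ against the fixed-point constraint, and verifying the root-label bookkeeping.

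Next I would treat the decomposable case, where $\sub(T)=\rsub(T)=:k\geq 2$. I would split $T$ at the root into the tree $A_1$ (root plus all but the rightmost subtree) and the rightmost subtree, and use the recursive definition of $h$ together with the property proved in Figure~\ref{case-to-prove} to express what $h(T)=T$ demands of these pieces. The aim is to show that the rightmost subtree, after undoing the label surgery, must itself be a fixed point $A_2$ of $h$ on at least two nodes, that there is an integer parameter $b>1$ recording where $h(A_1)$ attaches along the rightmost path, and that the inequality $\roots(A_2)\geq b-1$ (equivalently $\rpath(A_2)\geq b-1$ via Theorem~\ref{niceProperty}) is exactly the condition guaranteeing that the prescribed node on the rightmost path exists. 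This reconstructs (F2) and its side conditions. The parity statement then follows as a corollary: (F0) contributes the one node tree (odd), while both (F1) and (F2) strictly add an even number of nodes to an object that is itself (inductively) even, so by induction every fixed point other than the single node has an even number of nodes.

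The main obstacle I expect is the decomposable case (F2), specifically pinning down the parameter $b$ and proving that the label adjustments along the rightmost path are forced rather than merely consistent. The subtlety is that $h$ acts globally on the rightmost path through a cascade of ``$+1$'' increments, so I must show that requiring $h(T)=T$ determines uniquely both the attachment point of $h(A_1)$ and the resetting of the $b$-th label to $1$, and that no other decomposable shape is fixed. I would handle this by a careful parallel induction mirroring the one used in Theorem~\ref{h-is-involution}, tracking how the recursive right-decomposition of $h(T)$ must coincide node-for-node with the decomposition of $T$; the node-correspondence remark following that theorem (each node returning to itself under $h^2$) is the tool that makes this matching rigorous.
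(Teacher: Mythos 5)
Your overall strategy (induction plus a case analysis driven by Theorem~\ref{niceProperty}) is in the right spirit, and your handling of the parity claim and of the role of the inequality $\roots(A_2)\ge b-1$ in (F2) is fine. But the central case division you propose --- indecomposable fixed points yield (F1), decomposable ones yield (F2) --- is not the dichotomy that separates the two structures, and it already fails on the smallest nontrivial examples. An (F1) tree is obtained by adding one extra rightmost subtree to the root of $A$, so $\sub(T)=\sub(A)+1$; as soon as $A$ has more than one node this is at least $2$ and $T$ is decomposable. Dually, an (F2) tree has $\sub(T)=\sub(A_1)+1$, which equals $1$ when $A_1$ is a single node. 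Concretely, of the two fixed points on $4$ nodes, the one whose root has two children (the rightmost labelled $1$ with a single leaf below it) is decomposable yet has structure (F1) with $A$ the one-edge tree, while the other one (root labelled $2$, a single child labelled $2$ carrying two leaves) is indecomposable yet has structure (F2) with $A_1$ a single node. So your indecomposable case would miss every (F2) tree with $A_1$ a single node, and your decomposable case would wrongly try to force (F1) trees into the shape (F2).

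The paper splits differently: after disposing of $\roots(T)=1$ by a separate lemma (a fixed point with root label $1$ is a node or an edge), it cases on the label of the \emph{rightmost child of the root}. If that label is $1$, applying $h$ and comparing with $T$ forces the rightmost subtree to be $h(A_1)$, where $A_1$ is the rest of the tree, giving (F1); if that label is $b>1$, the same comparison forces $A_3=h(A_1)$, $y=b$, and that the remaining piece $A_2$ is itself a fixed point, giving (F2). The identity $\sub(T)=\rsub(T)$ for fixed points is true but is not what channels a tree into (F1) versus (F2); to repair your argument you would need to replace the decomposable/indecomposable split by the split on the label of the root's rightmost child (or add that label as a further subcase inside each of your two cases).
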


The smallest $\beta(1,0)$-tree whose structure is of type F2 is the one to the right for the case $n=4$ in Figure~\ref{small-fixed}. Also, all but the first and third  $\beta(1,0)$-trees for the case $n=6$  in Figure~\ref{small-fixed} have this structure.


\begin{figure}[h]
\begin{center}
\includegraphics[scale=0.5]{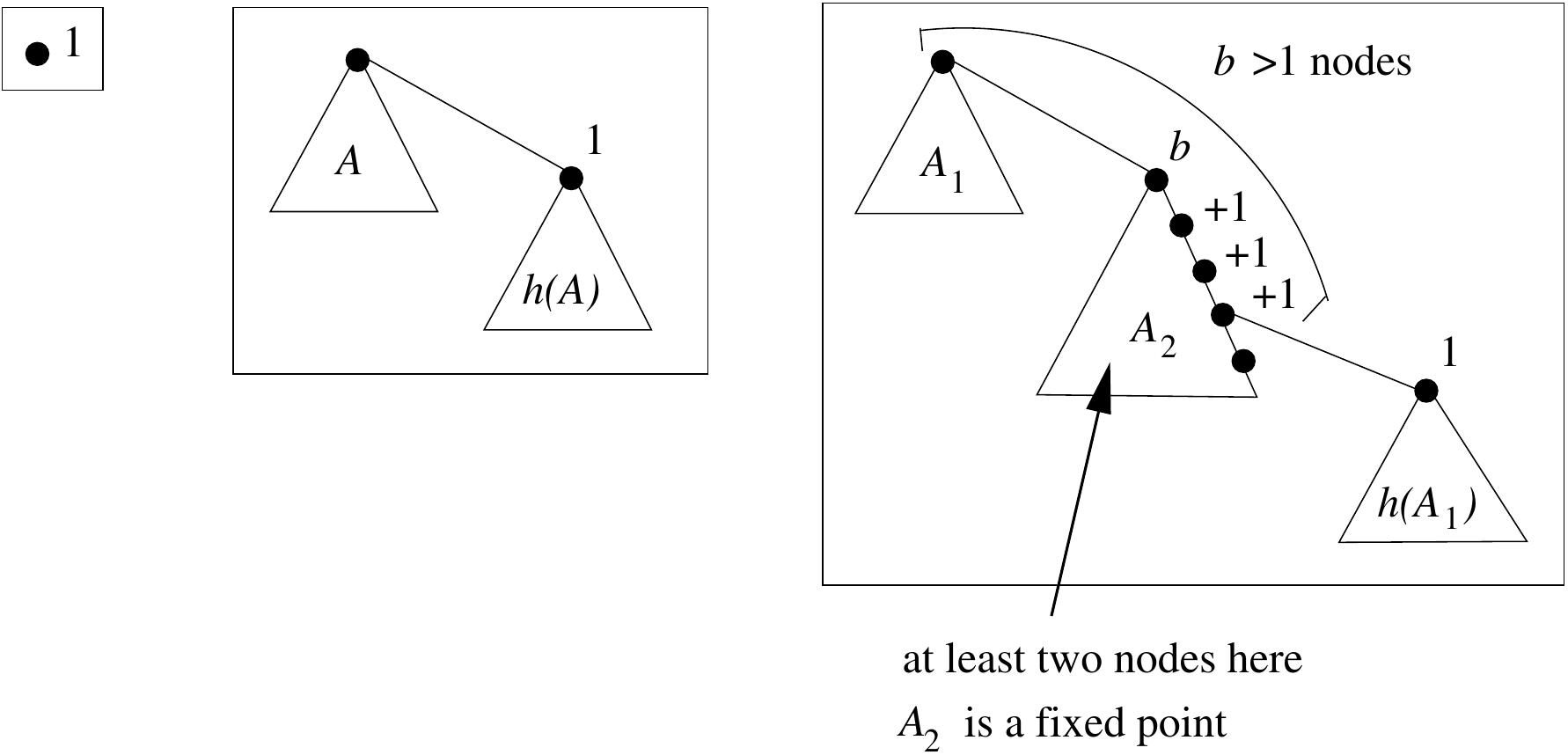}
\caption{All possible structures of fixed points of $h$.}\label{fixed-structure}
\end{center}
\end{figure}

The remaining of the section is devoted to proving some lemmas that will imply Theorem~\ref{main-structure}.

\begin{lemma}\label{lemma-1} If in a fixed point $T$ of $h$ the root's label equals $1$, then $T$ is either a node or an edge. \end{lemma}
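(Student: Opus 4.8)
The plan is to exploit the fixed-point condition together with Theorem~\ref{niceProperty} to convert the hypothesis on the root label into information about the length of the rightmost path. Since $T$ is a fixed point, we may take $S = h(T) = T$ in Theorem~\ref{niceProperty}, which gives $\mathrm{root}(T) = \mathrm{rpath}(T)$. Hence the assumption $\mathrm{root}(T) = 1$ immediately yields $\mathrm{rpath}(T) = 1$. This single substitution is the crux: it is what lets a statement about the root label be traded for a statement about the rightmost path, and everything else is then structural.

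Next I would read off the consequences of these two numerical facts. By the defining property of a $\beta(1,0)$-tree, the root label equals the sum of its children's labels; so if $T$ has more than one node, the condition $\mathrm{root}(T) = 1$ forces the root to have exactly one child, and that child to carry label $1$. On the other hand, $\mathrm{rpath}(T) = 1$ says that the rightmost path from the root to the rightmost leaf consists of a single edge, i.e.\ the rightmost child of the root is itself a leaf. Combining the two observations finishes the argument: the unique child of the root is simultaneously the rightmost child and a leaf, so $T$ consists of the root joined to a single leaf, which is the edge. Together with the one-node case (where $\mathrm{root}(T)=1$ holds trivially), this exhausts the possibilities.

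I do not anticipate a serious obstacle; the only points requiring care lie in the bookkeeping at the root. First, one must check that $\mathrm{rpath}(T)=1$ genuinely rules out a single \emph{internal} child of label $1$: such a child would extend the rightmost path to length at least two, contradicting $\mathrm{rpath}(T)=1$, so the child is indeed forced to be a leaf. Second, the single-node tree should be treated separately, since for it $\mathrm{root}=1$ but $\mathrm{rpath}=0$, i.e.\ it is exactly the base case excluded from the inductive relation $\mathrm{root}(A)=\mathrm{rpath}(h(A))$ used to define $h$; this is precisely why the conclusion of the lemma allows both a node and an edge. With these two caveats handled, the proof is short and direct.
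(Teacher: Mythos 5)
Your proof is correct and follows essentially the same route as the paper's: both arguments hinge on applying Theorem~\ref{niceProperty} to $S=h(T)=T$ to get $\roots(T)=\rpath(T)$ and then invoking the rule that the root's label is the sum of its children's labels. The paper phrases it contrapositively (if $T$ is neither a node nor an edge, then $\roots(T)>1$) while you argue directly, but this is only a cosmetic difference.
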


\begin{proof} Let $h(T)=T$ and suppose that $T$ is not a node or an edge. Then it is easy to see that either the root has more than one child (that is, $\sub(T)>1$), in which case $\roots(T)>1$, or the length of the rightmost path is more than 1 (that is, $\rpath(T)>1$), in which case, by Theorem~\ref{niceProperty}, $\roots(T)>1$.\end{proof}

\begin{lemma}\label{lemma-2}  
Let  $T$ be a tree with structure as described by one of the items F0, F1 or F2 in Theorem~\ref{main-structure}. Then $T$ is a fixed point of $h$.
\end{lemma}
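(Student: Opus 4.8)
Case F0 is immediate, since the one-node tree is the base case of $h$ and maps to itself. For the cases F1 and F2 the plan is to apply the recursive definition of $h$ directly to the tree $T$ produced by the construction and to check that its image is again $T$. Throughout I would rely on two facts: that $h$ is an involution, so $h(h(A))=A$ for every \btree\ $A$ (Theorem~\ref{h-is-involution}), and the statistic transfer of Theorem~\ref{niceProperty}, in particular $\rpath(h(A))=\roots(A)$ and $\roots(h(A))=\rpath(A)$. Because the tree $A_2$ appearing in F2 is itself a smaller fixed point, the verification of F2 will be an induction on the number of nodes, with F0 and the one-edge instance of F1 serving as base cases.

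For F1, I would first note that if $A$ is a single node the construction yields the one-edge tree, which is a base-case fixed point. If $A$ has at least two nodes, then $T$ is decomposable: its root carries the children of $A$ together with one new rightmost subtree, namely the copy of $h(A)$ whose root has been relabelled to $1$. I would then split $T$ as the definition of $h$ prescribes, taking $A_T$ to be the root together with all subtrees but the rightmost (which is just $A$) and $B_T$ to be the root together with the rightmost subtree (an indecomposable tree whose inner tree, after restoring the detached root, is $h(A)$). Applying the decomposable rule, $h(T)$ is obtained by identifying the rightmost leaf of $h(B_T)$ with the root of $h(A_T)=h(A)$; computing $h(B_T)$ through the indecomposable rule produces $h(h(A))=A$ with its rightmost path extended as dictated by $\roots(h(A))=\rpath(A)$. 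Re-gluing these two pieces and comparing labels along the rightmost path should reproduce exactly $A$ with the relabelled copy of $h(A)$ hung to its right, i.e.\ the tree $T$.

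For F2 I would argue by induction. Since $A_2$ is a fixed point with fewer nodes, $h(A_2)=A_2$, and the hypothesis $\roots(A_2)\ge b-1$ (equivalently $\rpath(A_2)\ge b-1$ by Theorem~\ref{niceProperty}) guarantees that the $(b-1)$-st node of the rightmost path of $A_2$ exists, so the construction is well defined. Again $T$ is decomposable; I would write its $h$-decomposition, apply the decomposable and indecomposable rules, and recognise the resulting pieces. The three label operations built into the construction --- adding $1$ to the non-root nodes of the rightmost path from the $(b-1)$-st upwards, forcing the $b$-th node to have label $1$, and resetting the root to $b$ --- are precisely the adjustments (the ``$+1$'' increments and the distinguished leaf of label~$1$ in Figure~\ref{involutionH}) that $h$ performs when it converts a decomposition into a right-decomposition. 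Matching these node by node along the rightmost path, and using $h(h(A_1))=A_1$ together with the fixed-point property $h(A_2)=A_2$, should yield $h(T)=T$.

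The main obstacle is the label bookkeeping rather than the overall shape of the trees. One must track the interplay of the ``$+1$'' increments, the convention that a subtree and its $h$-image agree except possibly at the root, and the forced labels equal to $1$, and check at each stage that every intermediate object is a legitimate \btree, with each non-root label between $1$ and the sum of its children's labels. The numerical condition $\roots(A_2)\ge b-1$ is exactly what keeps these labels in range and makes the rightmost path long enough for the construction, so confirming that these constraints are respected throughout the recursion is the delicate point; it is most transparently carried out by following the schematic figures that accompany the definition of $h$.
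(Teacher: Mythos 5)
Your treatment of F0 and F1 matches the paper's (F1 is exactly ``apply the decomposable rule, use $h^2(A)=A$''), but in the F2 case there is a concrete gap at the step ``matching these node by node \ldots should yield $h(T)=T$.'' After you apply the decomposable and then the indecomposable rule to $T$, what remains to be evaluated is $h$ of the modified subtree $A_2'$, i.e.\ of $A_2$ with $h(A_1)$ hung at the $(b-1)$-st node of its rightmost path and with the labels shifted. The recursive definition of $h$ gives no one-step answer for this tree: to apply the rules to $A_2'$ you must recurse through the internal structure of $A_2$, and the hypothesis $h(A_2)=A_2$ says nothing by itself about the image of a \emph{modification} of $A_2$. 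So this is not ``label bookkeeping'' along the rightmost path; carried out forwards it would need an auxiliary inductive property of the same kind as the one in Figure~\ref{case-to-prove} used to prove Theorem~\ref{h-is-involution}, and you have not supplied it.

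The paper closes exactly this gap with a reversal you do not use: it isolates the remaining identity (Figure~\ref{need-to-prove}, an equality $h(L)=R$ with $L$ built from $A_2$ and $h(A_1)$) and, since $h$ is an involution, verifies instead that $h(R)=L$. The point is that $R$ is already presented in the shape produced by the decomposable/indecomposable rules, so applying $h$ to it is a single top-level application of the definition and immediately returns $L$. You list involutivity among your tools but only invoke it in the form $h(h(A_1))=A_1$; using it to flip the direction of the hard verification is the missing idea. A minor further point: the induction on the number of nodes you set up for F2 is unnecessary, since F2 takes $h(A_2)=A_2$ as a hypothesis rather than something to be re-derived.
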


\begin{proof} It follows from the definition of $h$ that the one node $\beta(1,0)$-tree is a fixed point. Moreover, from the definition of $h$  and the fact that $h^2(A)=A$, we see that trees having structure F1 are also  fixed points. To prove that a tree having structure F2 (recall the right hand-side of Figure \ref{fixed-structure}) is a fixed point of $h$, we compute its image under $h$ by applying first the decomposable case in the definition and then the indecomposable case; the result is shown schematically in Figure~\ref{fixed-structure-proof-1}. 

\begin{figure}[h]
\begin{center}
\includegraphics[scale=0.5]{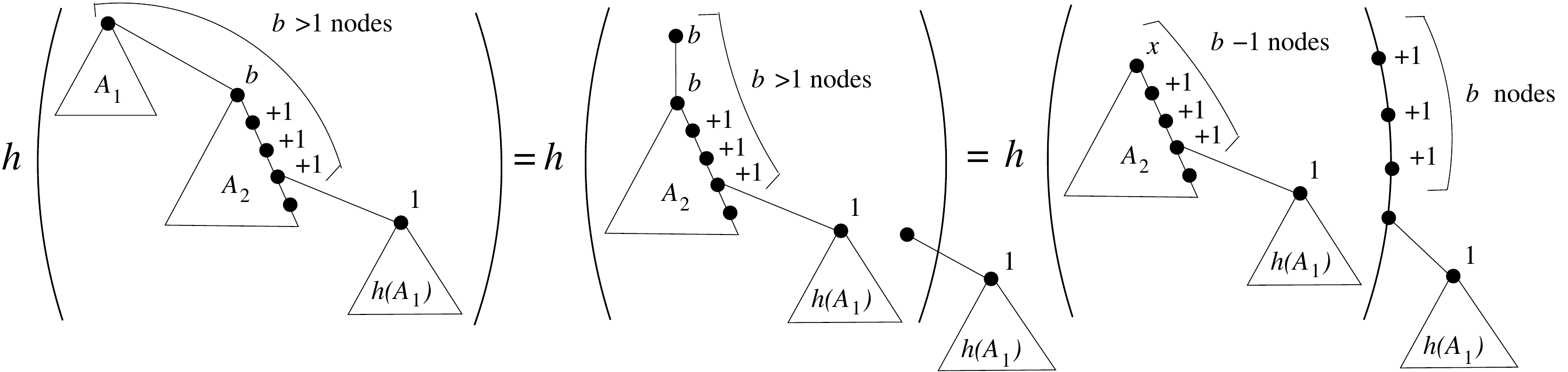}
\caption{Initial steps to prove that the rightmost structure in Figure \ref{fixed-structure}  is a fixed point of $h$.}\label{fixed-structure-proof-1}
\end{center}
\end{figure}

Thus, it is enough to prove  the property of $h$ described by  Figure~\ref{need-to-prove}, where $A_2$ is a fixed point. As $h$ is an involution, it is enough to show that the image of the tree on the right hand-side is the tree 
to which $h$ is applied on the left hand-side. As this is immediate from the definition of $h$, the lemma is proved.


\begin{figure}[h]
\begin{center}
\includegraphics[scale=0.5]{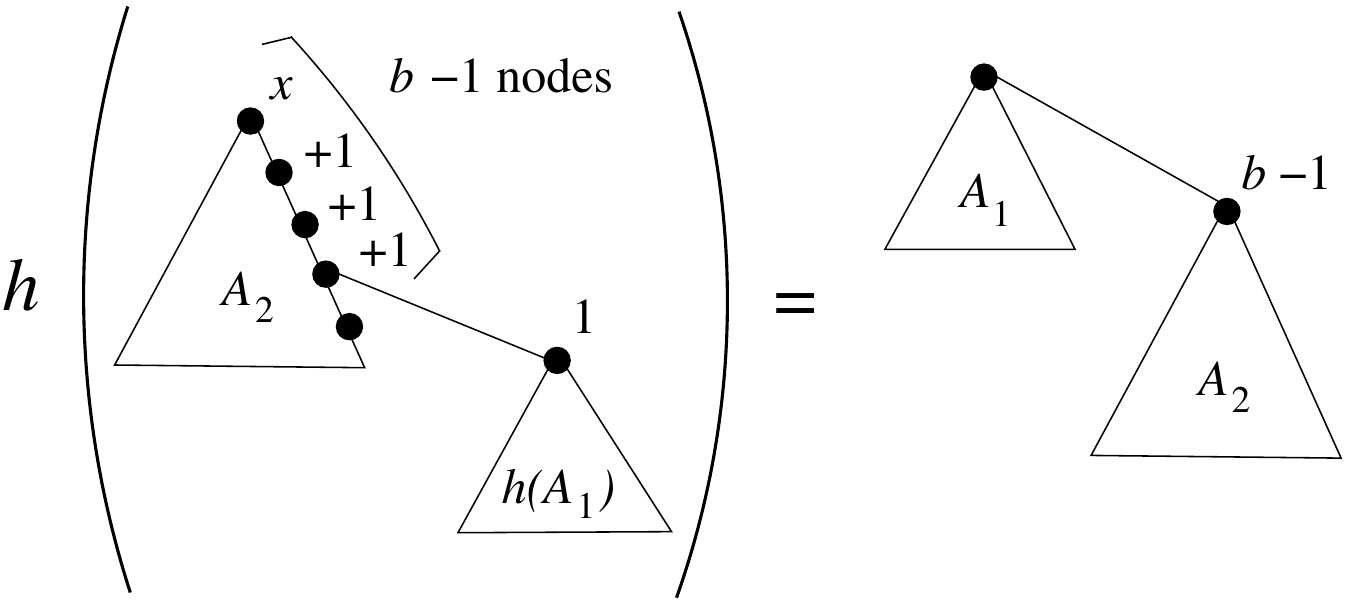}
\caption{A property of $h$ to be proved. Here $A_2$ is a fixed point.}\label{need-to-prove}
\end{center}
\end{figure}


\end{proof}

\begin{lemma}\label{lemma-3}  If $T$ is a fixed point of $h$ then $T$ has one of the structures F0, F1 or F2 in Theorem~\ref{main-structure}. \end{lemma}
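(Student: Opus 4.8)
The plan is to read off the structure of a fixed point $T$ from a single dichotomy, handling each branch by running the recursive definition of $h$ against the identity $h(T)=T$. Throughout I use the two consequences of Theorem~\ref{niceProperty} that hold for any fixed point, namely $\roots(T)=\rpath(T)$ and $\sub(T)=\rsub(T)$. First I would clear the degenerate cases: if $T$ is a single node it is F0, and if $\roots(T)=1$ then Lemma~\ref{lemma-1} forces $T$ to be a node or an edge, the edge being the F1-tree built from a single node. So I may assume $r:=\roots(T)=\rpath(T)\ge 2$. Let $c$ be the rightmost child of the root and let $\ell$ be the label of $c$ in $T$. Inspecting the constructions of Lemma~\ref{lemma-2} shows that in an F1-tree the rightmost subtree is $h(A)$ with its root reset to $1$, so $\ell=1$, whereas in an F2-tree the rightmost subtree $A_2'$ has root label $b>1$, so $\ell=b>1$. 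Hence it suffices to prove that $\ell=1$ forces F1 and that $\ell>1$ forces F2; in particular the two structures are mutually exclusive.

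The case $\ell=1$ I would settle directly. Since $r\ge 2$ but the rightmost child has label $1$, the root must have at least two children, so $T$ is decomposable; write $A$ for the left part (the root together with all but the rightmost subtree, root relabelled) and let $B$ be the root together with the rightmost subtree. As the rightmost child has label $1$ we have $\roots(B)=1$, and so Theorem~\ref{niceProperty} gives $\rpath(h(B))=1$. Evaluating $h(T)$ through the decomposable clause of the definition, the piece $h(A)$ is glued at the rightmost leaf of $h(B)$; since $\rpath(h(B))=1$ this leaf is the rightmost child of the root of $h(B)$, which is the root of $h(T)$. Comparing the rightmost subtrees of the two sides of $h(T)=T$ then forces the rightmost subtree of $T$ to be $h(A)$ with its root read as $1$, which is precisely the assertion that $T$ is the F1-tree built from $A$.

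The case $\ell=b>1$ is where I expect the real work. The outer F2 data are immediate: $b=\ell$, and $A_1$ is the left part of $T$ (a single node exactly when $T$ is indecomposable). I would then run the definition of $h$ on $T$ in the order ``decomposable, then indecomposable'' that was used in Lemma~\ref{lemma-2} and depicted in Figures~\ref{fixed-structure-proof-1} and~\ref{need-to-prove}, only now read in reverse and matched against $h(T)=T$. The difference from the previous case is that now $\roots(B)=b$, so by Theorem~\ref{niceProperty} the rightmost leaf of $h(B)$ sits at depth $\rpath(h(B))=b$ on the rightmost path; this is what locates the glued copy of $h(A_1)$ deep inside the rightmost subtree $A_2'$ rather than directly under the root. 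Deleting that copy of $h(A_1)$ and undoing the label changes recorded along the rightmost path defines a candidate inner tree $A_2$, and the same matching forces the relabellings prescribed by F2 (reset the $b$-th node to $1$, raise the nodes above it, and set the root to $b$). The essential point to verify is that this $A_2$ is itself a fixed point: stripping the two outer layers from $h(T)=T$ reduces the fixed-point equation for $T$ to the equation $h(A_2)=A_2$, which is exactly the reduction carried out in Lemma~\ref{lemma-2} (Figure~\ref{need-to-prove}) read backwards. Once $h(A_2)=A_2$ is known, Theorem~\ref{niceProperty} gives $\rpath(A_2)=\roots(A_2)$, and the geometry of the rightmost path yields both $\roots(A_2)\ge b-1$ and that $A_2$ has at least two nodes, so $(A_1,A_2,b)$ is legitimate F2 data and $T$ is the F2-tree it builds.

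The main obstacle is confined to the F2 case, and it is entirely a matter of bookkeeping: tracking the labels along the rightmost path correctly (the index $b-1$ against the depth $b$, the single reset to $1$, and the block of ``$+1$'' increments) while simultaneously certifying $h(A_2)=A_2$ for the recovered inner tree. Finally, combined with the node counts this gives the parity claim of Theorem~\ref{main-structure}: since $h$ preserves the number of nodes, an F1-tree has $2|A|$ nodes and an F2-tree has $2|A_1|+|A_2|$ nodes with $A_2$ a smaller fixed point, so by induction the single node is the only fixed point on an odd number of nodes.
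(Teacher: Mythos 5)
Your proposal is correct and follows essentially the same route as the paper: after disposing of $\roots(T)=1$ via Lemma~\ref{lemma-1}, the paper likewise splits on whether the rightmost child of the root has label $1$ or $b>1$ (the two structures of Figure~\ref{two-structures}), applies $h$ and compares with $T$ to force the rightmost subtree to be $h(A_1)$ re-rooted at $1$ in the first case, and in the second to locate $h(A_1)$ at depth $b$ and reduce $h(T)=T$ to $h(A_2)=A_2$. The label bookkeeping you defer is exactly what the paper relegates to Figures~\ref{structure-proof-1} and~\ref{structure-proof-2}, so the level of detail is also comparable.
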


\begin{proof} 

Suppose $T$ is a fixed point of $h$. If $\roots(T)=1$ then by Lemma \ref{lemma-1} $T$ is either a node or an edge. We assume that $\roots(T)>1$.  Then the structure of $T$ must be one of the two structures in Figure \ref{two-structures}, where $A_1$, $A_2$ and $A_3$ can be single node trees. 

\begin{figure}[h]
\begin{center}
\includegraphics[scale=0.5]{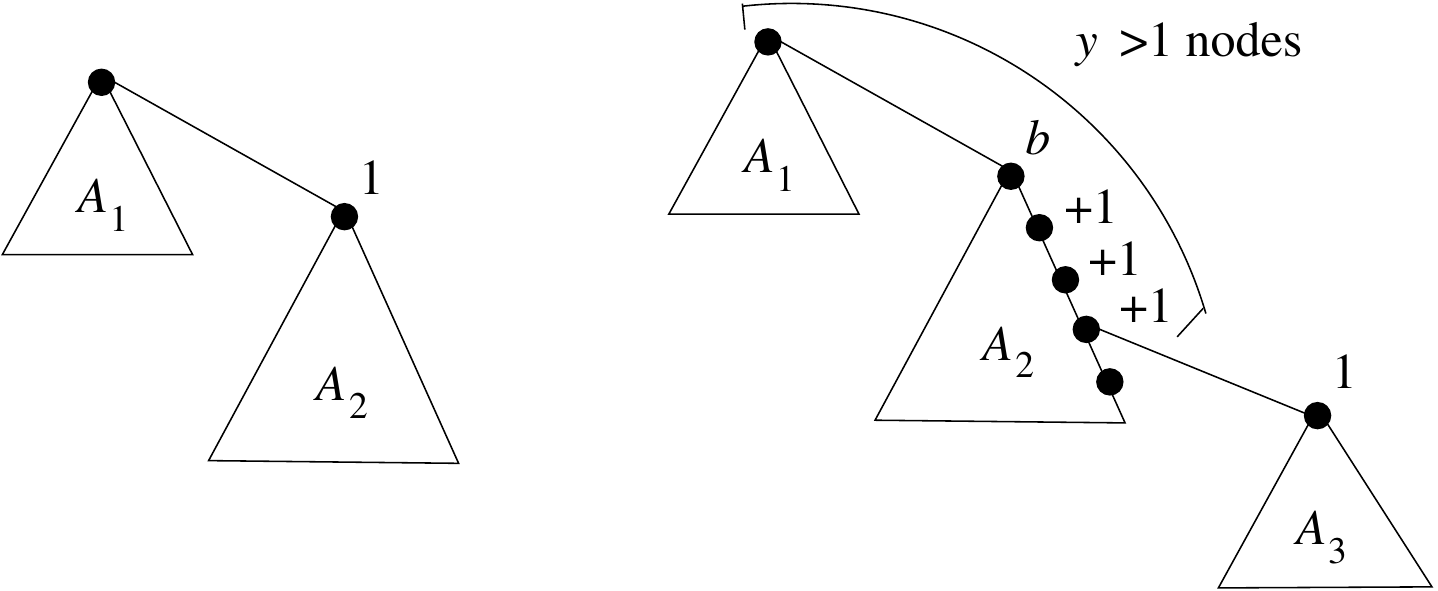}
\caption{A $\beta(1,0)$-tree $T$ with $\roots(T)>1$ has one of these two structures.}\label{two-structures}
\end{center}
\end{figure}

If the structure of $T$ is as the one on the left in Figure~\ref{two-structures}, by applying $h$  (see Figure~\ref{easy-case}) it becomes clear that for $T$ to be a fixed point we must have $A_2=h(A_1)$ and $A_1=h(A_2)$ (which are in fact equivalent conditions since $h$ is an involution).  Thus, the structure of $T$ is as given by F1.

\begin{figure}[h]
\begin{center}
\includegraphics[scale=0.5]{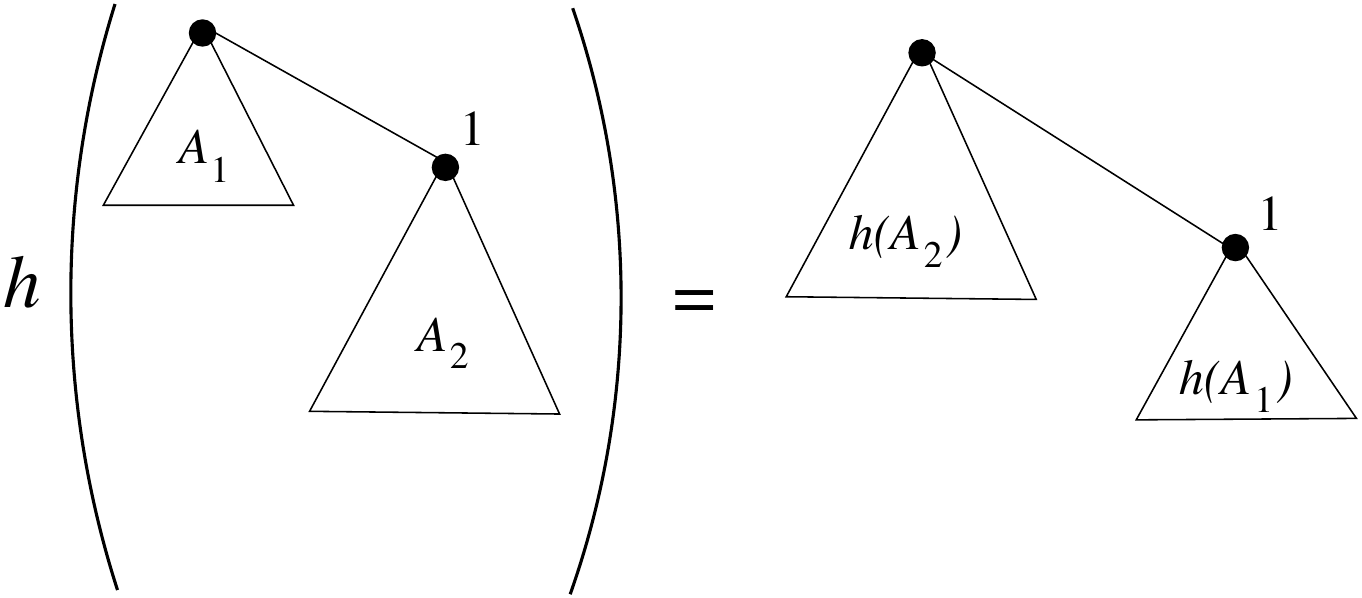}
\caption{Applying $h$ to the structure on the left of  Figure \ref{two-structures}.}\label{easy-case}
\end{center}
\end{figure}

Finally, suppose that the structure of $T$ is as that on the right of Figure~\ref{two-structures}.
We begin by applying $h$ to $T$, which is shown schematically in Figure~\ref{structure-proof-1}. Comparing $T$ with its image under $h$ we conclude that $A_3=h(A_1)$, $y=b$, and that the first equality in Figure~\ref{structure-proof-2} must hold. The second equality in Figure~\ref{structure-proof-2}  is easy to check applying the definition of $h$. Comparing the first and the last trees in this figure  we conclude that $A_2$ must be a fixed point under $h$. Thus, the structure of $T$ is as given by F2.

\begin{figure}[h]
\begin{center}
\includegraphics[scale=0.5]{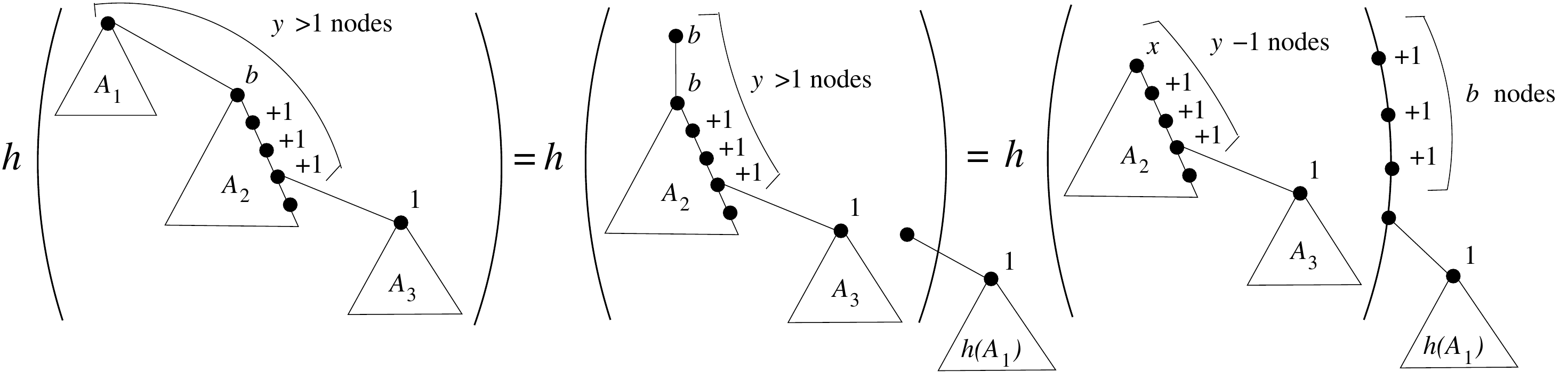}
\caption{Applying $h$ to the right structure in Figure \ref{two-structures}.}\label{structure-proof-1}
\end{center}
\end{figure}

\begin{figure}[h]
\begin{center}
\includegraphics[scale=0.5]{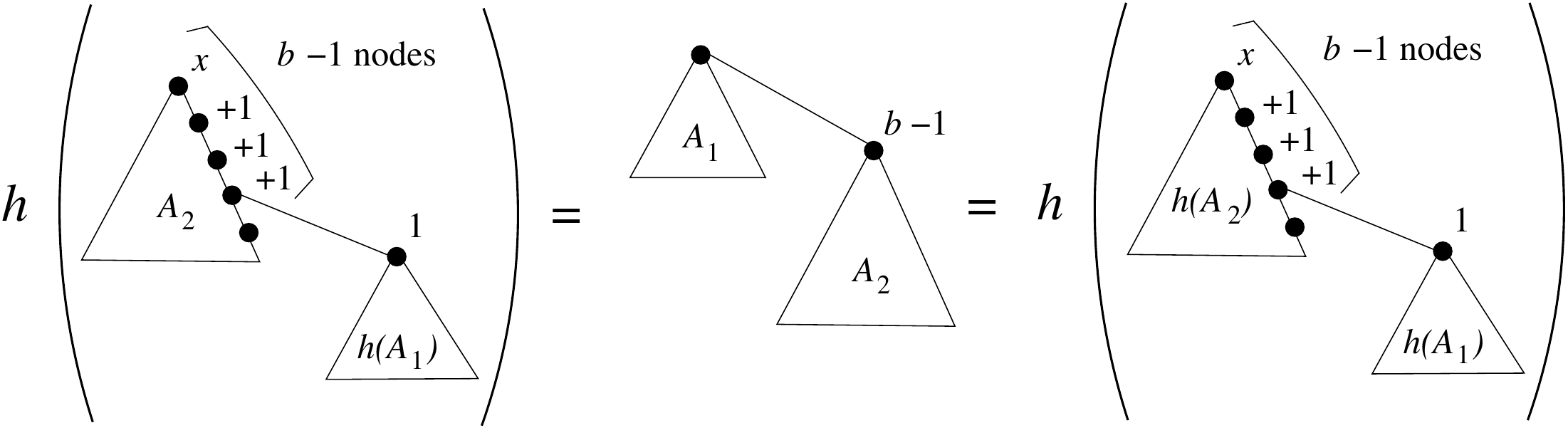}
\caption{A property of the structure under consideration coming from Figure \ref{structure-proof-1} .}\label{structure-proof-2}
\end{center}
\end{figure}

\end{proof}

A proof of Theorem \ref{main-structure} is now given by Lemmas \ref{lemma-2} and \ref{lemma-3}. That the number of nodes of a fixed point must be even follows immediately by induction.

\section{Enumeration of fixed points on $h$}\label{sec-Enum}

In this section we use the structure of fixed points given in Theorem~\ref{main-structure} to prove that $h$ has $\frac{1}{n}\binom{3n-2}{n-1}$ fixed points with $2n$ nodes. (Recall that the only fixed point with an odd number of nodes is the one node tree.)

For $n\geq 1$, let $a_n$   be the number of fixed points of $h$ with $2n$ nodes, and let $a_{n,k}$ be the number of those with root label equal to $k$. Let $b_n$ be the number of $\beta(1,0)$-trees with $n$ nodes, and let $b_{n,k}$ be the number of those with root label equal to $k$, except that, for technical convenience, we take $b_{1,0}=1$ and $b_{1,1}=0$. The corresponding generating functions are denoted by $A(x),A(x,y),B(x)$, and $B(x,y)$.

As mentioned in the introduction, there is a bijective correspondence between $\beta(1,0)$-trees and rooted non-separable maps. Under this bijection (described in detail in Section~\ref{sec-RNPM}), trees with $n$ nodes are mapped to maps with $n$ edges, and, moreover, the label of the root of the tree plus one corresponds to the degree of the root face of the map.
Thanks to this bijection, we can use expressions for $B(x)$ and $B(x,y)$ that were found in the map enumeration context. Brown~\cite{nonsepB} gave the following parametric expression for the series $\tilde{B}(x)=B(x)-x$:
$$\tilde{B}(x)= u(x)^2(1-2u(x)),$$  
where  $x=u(x)(1-u(x))^2.$
A simple application of Lagrange's inversion formula gives
$$u(x)=\sum_{n\geq 1} \frac{1}{n}\binom{3n-2}{n-1} x^n=x+2x^2+7x^3+30x^4+143x^5+\cdots$$
(see sequence A006013 on OEIS).

Brown also gave the following equation relating $\tilde{B}(x)$ and $\tilde{B}(x,y)=(B(x,y)-x)y$:
\begin{equation}
\label{eq:bxy}
\tilde{B}(x,y)^2+[1-y+xy^2-y\tilde{B}(x)]\tilde{B}(x,y)-xy^2(\tilde{B}(x)+x(1-y))=0.
\end{equation}

Theorem~\ref{main-structure} allows us to find an equation linking the series $A(x,y)$ and $B(x,y)$.

\begin{lemma} The series $A(x,y)$ and $B(x,y)$ are related by the equation
\begin{equation}
A(x,y)=yB(x,y)+y^2B(x,y)\frac{A(x,y)-A(x,1)}{y-1}.
\label{eq:AB}
\end{equation}
\end{lemma}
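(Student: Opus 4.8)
The plan is to read \eqref{eq:AB} off Theorem~\ref{main-structure}, using the fact that the structures F0, F1, F2 partition the fixed points of $h$. Since $A(x,y)=\sum_{n\ge 1}\sum_k a_{n,k}x^ny^k$ records half the number of nodes in $x$ and the root label in $y$, the single node tree (family F0, with an odd number of nodes) contributes nothing, so I only need the contributions of F1 and F2. Two bookkeeping facts underpin everything: $h$ preserves the number of nodes, so each construction doubles an underlying node count and hence produces a tree with an even number of nodes; and the convention $b_{1,0}=1$ lets the single-node auxiliary tree enter $B(x,y)$ with root ``label'' $0$, which makes the edge appear automatically as the smallest F1 fixed point and removes the need for a separate base case.

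First I would handle F1. If the auxiliary tree $A$ has $m$ nodes and root label $k$, then $h(A)$ also has $m$ nodes, so the resulting fixed point has $2m$ nodes and contributes the factor $x^m$; attaching the (relabelled) root of $h(A)$ as a new rightmost child of the root of $A$ raises the root label from $k$ to $k+1$, contributing $y^{k+1}$. Summing $b_{m,k}x^m y^{k+1}$ over all $m,k$ gives exactly $yB(x,y)$, the first term of \eqref{eq:AB}; in particular the term $m=1,\ k=0$ produces $xy$, i.e.\ the edge.

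Next I would handle F2, which carries the real content. The data is a triple $(A_1,A_2,b)$ with $A_1$ an arbitrary $\beta(1,0)$-tree (say $m$ nodes, root label $j$), $A_2$ a fixed point with $2p$ nodes and root label $r=\roots(A_2)$, and an integer $b$ with $2\le b\le r+1$ (equivalently $b>1$ and $r\ge b-1$). Counting nodes, the result has $m+m+2p=2(m+p)$ nodes, contributing $x^{m+p}$; and since $A_2'$ is hung as a new rightmost child of the root of $A_1$ with label $b$, the root label of the result is $j+b$, contributing $y^{j+b}$. Hence F2 contributes
\[
\sum_{m,j}b_{m,j}x^m y^{j}\sum_{p,r}a_{p,r}x^p\sum_{b=2}^{r+1}y^{b}
= B(x,y)\sum_{p,r}a_{p,r}x^p\,y^2\frac{y^{r}-1}{y-1},
\]
where I have used the geometric sum $\sum_{b=2}^{r+1}y^b=y^2\frac{y^r-1}{y-1}$. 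Splitting $y^r-1$ and recognizing $\sum_{p,r}a_{p,r}x^p y^r=A(x,y)$ and $\sum_{p,r}a_{p,r}x^p=A(x,1)$ turns this into $y^2B(x,y)\frac{A(x,y)-A(x,1)}{y-1}$, the second term of \eqref{eq:AB}. Adding the two contributions gives the identity.

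The hard part will not be the algebra but making the translation of the F2 construction airtight. I must confirm that the assignment $(A_1,A_2,b)\mapsto T$ is a genuine bijection onto the F2 family, so that the admissible range $2\le b\le r+1$ (coming from $b>1$ and $\roots(A_2)\ge b-1$) is matched exactly and no tree is double-counted; this is guaranteed by the ``exactly one structure'' clause of Theorem~\ref{main-structure} together with the recovery of $A_1$, $A_2$ and $b$ in Lemma~\ref{lemma-3}. I also have to check carefully that the root label really shifts to $j+b$ and that the node count splits as $2(m+p)$. The shift by $b$ in the exponent of $y$ and the two endpoints of the sum over $b$ are precisely where an off-by-one slip is easiest, so that is where I would concentrate the verification.
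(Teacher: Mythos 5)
Your proposal is correct and follows essentially the same route as the paper: translate F1 into the term $yB(x,y)$ (using the convention $b_{1,0}=1$ to absorb the edge) and translate the F2 triples $(A_1,A_2,b)$ with $2\le b\le \roots(A_2)+1$ into the geometric sum $y^2+\cdots+y^{k+1}$, which packages as $y^2\frac{A(x,y)-A(x,1)}{y-1}$. Your node counts, root-label shifts, and range of $b$ all match the paper's computation.
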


\begin{proof}
The first summand clearly corresponds to trees having structure F1 in Theorem~\ref{main-structure} (here is where setting $b_{1,0}=1$ and $b_{1,1}=0$ makes the formula slightly more compact). Trees having structure F2 give rise to the second summand as follows. Given an arbitrary tree $A_1$ with $n_1$ nodes and root $k_1$ and a fixed point $A_2$ with $2n_2$ nodes and root $k_2$, the construction gives $k_2$ fixed points, all of them with $2n_1+2n_2$ nodes and with roots equal to $k_1+2,k_1+3,\ldots, k_1+k_2+1$. The formula in the statement follows by observing that
$$y^2\frac{A(x,y)-A(x,1)}{y-1}=\sum_{n,k} a_{n,k}x^ny^2\frac{y^k-1}{y-1}  =\sum_{n,k} a_{n,k} x^n(y^2+\cdots+y^{k+1}).$$
\end{proof}

We next solve equation~(\ref{eq:AB}) and show that actually $A(x)=u(x)$.

\begin{theorem}\label{mainEnum}
The series $A(x)$ satisfies the equation $A(x)(1-A(x))^2=x$ and thus the number of fixed points of $h$ is $a_n=\frac{1}{n}\binom{3n-2}{n-1}$. Moreover, the series $A(x,y)$ satisfies the equation
$$[2yA(x)-1]A(x,y)^2-[3yA(x)^2-3yA(x)+1]A(x,y)+yA(x)^3-2yA(x)^2+A(x)y=0.$$
\end{theorem}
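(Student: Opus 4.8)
The plan is to read~(\ref{eq:AB}) as a linear functional equation for $A(x,y)$ with catalytic variable $y$ and to solve it by the kernel method, feeding in Brown's relations for $B(x)$ and $B(x,y)$ as external input. Multiplying~(\ref{eq:AB}) by $(y-1)$ and collecting the terms carrying $A(x,y)$ puts it in the form
$$A(x,y)\bigl[(y-1)-y^2B(x,y)\bigr]=yB(x,y)\bigl[(y-1)-yA(x,1)\bigr],$$
so the kernel is $K(x,y)=(y-1)-y^2B(x,y)$. Since $B(0,y)=0$ and $\partial_yK(0,1)=1\neq0$, there is a unique formal power series $Y=Y(x)$ with $Y(0)=1$ and $K(x,Y)=0$, i.e.\ $Y-1=Y^2B(x,Y)$. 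Setting $y=Y$ annihilates the left-hand side; since $YB(x,Y)=x+O(x^2)$ is not the zero series, I may cancel it and obtain $(Y-1)-YA(x,1)=0$, that is $A(x)=A(x,1)=1-1/Y$, equivalently $Y=1/(1-A(x))$. Combined with the kernel relation this also gives $B(x,Y)=A(x)\bigl(1-A(x)\bigr)$.

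To pin down $A(x)$ I would feed this into Brown's quadratic~(\ref{eq:bxy}) evaluated at $y=Y$. Writing $\tilde B(x,Y)=(B(x,Y)-x)Y$ and substituting $B(x,Y)=A(1-A)$ and $Y=1/(1-A)$ (with $A=A(x)$), equation~(\ref{eq:bxy}) becomes a single algebraic relation among $A$, $x$ and $\tilde B(x)$. Inserting Brown's parametrization $x=u(1-u)^2$ and $\tilde B(x)=u^2(1-2u)$ turns this into a polynomial identity in the series $A$ and $u$; I expect it to factor so that the only branch with $A(0)=0$ forces $A=u$, equivalently $B(x,Y)=xY$, which is precisely $A(x)\bigl(1-A(x)\bigr)^2=x$. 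A direct verification that $A=u$ satisfies the relation is routine: one checks $\tilde B(x,Y)=u^2$ and that the three terms of~(\ref{eq:bxy}) evaluate to $u^4$, $-2u^4$ and $u^4$. Lagrange inversion applied to $A(1-A)^2=x$, exactly as quoted for $u$ in the excerpt, then yields $a_n=\frac1n\binom{3n-2}{n-1}$.

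For the explicit quadratic satisfied by $A(x,y)$, I would instead eliminate $B(x,y)$ between the two equations. Solving~(\ref{eq:AB}) rationally gives
$$B(x,y)=\frac{(y-1)A(x,y)}{y\bigl[(y-1)+yA(x,y)-yA(x)\bigr]},$$
and converting to $\tilde B(x,y)=(B(x,y)-x)y$ and substituting into~(\ref{eq:bxy}), then replacing $x$ by $A(1-A)^2$ and $\tilde B(x)$ by $A^2(1-2A)$ (both now known), should reduce the whole expression, after clearing denominators and discarding spurious factors, to the claimed quadratic in $A(x,y)$. As consistency checks, setting $y=1$ with $A(x,y)=A$ makes every coefficient of the stated quadratic vanish, and its constant term $yA^3-2yA^2+Ay=yA(1-A)^2$ equals $xy$, matching the lowest-order behaviour of $A(x,y)$.

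The conceptual steps are short, so the main obstacle is the algebra rather than the strategy. The delicate points are, first, justifying the branch choice for $Y$ and the cancellation of the factor $YB(x,Y)$, and, second, the bookkeeping when the rational expression for $B(x,y)$ is substituted into the degree-two relation~(\ref{eq:bxy}): one must verify that, modulo $A(1-A)^2=x$, the resulting large rational expression simplifies \emph{exactly} to the stated quadratic and not to a proper multiple carrying extraneous factors. I would control this by keeping everything in terms of $A=A(x)$ and the single parameter $u$ (with $A=u$), which renders all coefficients polynomial and the necessary factorizations transparent.
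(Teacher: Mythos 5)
Your proposal is correct and follows essentially the same route as the paper: the kernel method applied to equation~(\ref{eq:AB}), extraction of the unique power-series root $Y$ of the kernel to get $A(x)=1-1/Y$, identification of $Y=(1-u)^{-1}$ via Brown's equation~(\ref{eq:bxy}) and the parametrization $x=u(1-u)^2$, and elimination of $B(x,y)$ to obtain the quadratic for $A(x,y)$. The only difference is cosmetic --- the paper organizes the branch identification as a resultant computation yielding the factorization $[1-2uy][1-y+uy]^2$, whereas you substitute $Y=1/(1-A)$ directly into~(\ref{eq:bxy}); both lead to the same branch analysis (and note that at $y=1$ it is the whole quadratic expression that vanishes upon setting $A(x,1)=A$, not each coefficient separately).
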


\begin{proof}
We use the kernel method to obtain $A(x)=A(x,1)$. We rewrite equation~(\ref{eq:AB}) as
\begin{equation}
A(x,y)(y-1-y^2B(x,y))=y^2B(x,y)(1-A(x))-yB(x,y).
\label{eq:kernel}
\end{equation}

Suppose that  $y(x)$ is a power series such that $y(x)-1-y(x)^2B(x,y(x))=0$. By substituting into~(\ref{eq:kernel}), we obtain
$$A(x)=\frac{y(x)-1}{y(x)},$$
so it only remains to find $y(x)$. In terms of $\tilde{B}(x,y)$, $y(x)$ satisfies $$y(x)-1-xy(x)^2-y(x)\tilde{B}(x,y(x))=0.$$
The resultant of this equation and equation~(\ref{eq:bxy}) is
$$[1-y(x)][-2xy(x)^3+y(x)^2(\tilde{B}(x)+2x+1)-2y(x)+1],$$
so $y(x)$ is clearly a root of the second factor. Writing $x=u(x)(1-u(x))^2$ and $\tilde{B}(x)=u(x)^2(1-2u(x))$, we obtain that $y(x)$ is a root of
$$[1-2u(x)y(x)][1-y(x)+u(x)y(x)]^2.$$
As $y(x)$ must be a power series, it is a root of the second factor and hence
$y(x)=(1-u(x))^{-1}$. From this it follows immediately that $A(x)=u(x)$, as claimed.

The equation for $A(x,y)$ follows by eliminating $B(x,y)$ from equations~(\ref{eq:kernel}) and~(\ref{eq:bxy}), and then writing $B(x)$ and $x$ in terms of $A(x)$.
\end{proof}

The first  few coefficients of $A(x,y)$ are
$$A(x,y)=xy+2x^2y^2+(3y^2+4y^3)x^3+(9y^2+13y^3+8y^4)x^4+\cdots$$

We remark that $A(x)$ is closely related to the generating function $T(x)$ for ternary trees by number of internal nodes (or also, among others, non-crossing trees by number of edges). It is well-known that $T(x)$ satisfies $T(x)=1+xT(x)^3$ (see sequence  A001764 in OEIS). Then it is easy to check, by computing a resultant or otherwise, that $A(x)=xT(x)^2$.

\section{A link to fixed points of taking the dual map on rooted non-separable planar maps}\label{sec-RNPM}

\begin{definition} A {\em planar map} is a connected graph embedded in the sphere with no edge-crossings. Such embeddings are considered
up to continuous deformation and multiple edges and loops are allowed.  A map has {\em vertices} (points), {\em edges}, and {\em faces} (disjoint simply connected domains).
\end{definition}

The maps we are dealing with were considered by Tutte~\cite[Ch. 10]{Tutte1998}, who founded the
enumeration theory of planar maps in a series of papers in the 1960s.

\begin{definition} A {\em cut vertex} in a map is a vertex whose deletion disconnects the
map. A map is {\em non-separable} if it has no loops and no cut vertices. \end{definition}

The maps considered by us are {\em
rooted}, meaning that a directed edge $vw$ is distinguished. The face that lies to the right of the root edge when going from $v$ to $w$ is called the \emph{root-face} (in the figures it is customary to make it agree with the unbounded face). The vertex $u$ is called the \emph{root-vertex}.


All rooted non-separable planar maps on 4 edges are given in Figure~\ref{maps}.

\begin{figure}[h]
\begin{center}
\includegraphics[scale=0.4]{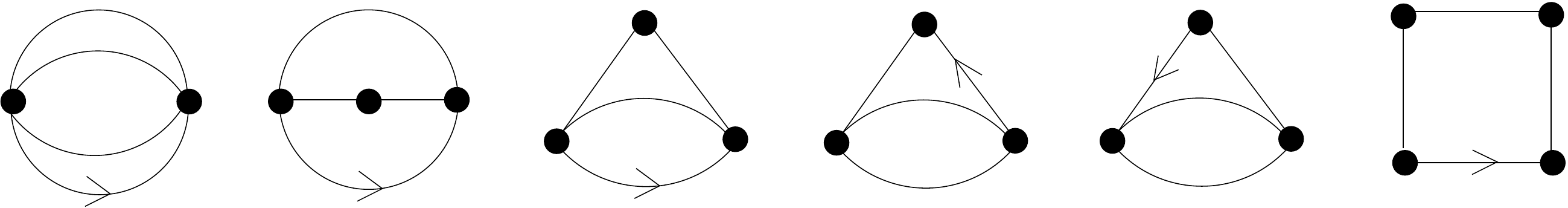}
\caption{All rooted non-separable planar maps on 4 edges.}\label{maps}
\end{center}
\end{figure}

\begin{definition} Two rooted maps are {\em isomorphic} if there is a homeomorphism of the sphere taking one map into the other, preserving incidences between vertices, edges and faces, and preserving the root-vertex, root-edge and root-face.\end{definition}

If $M$ is a rooted map, we define the dual map $M^*$ as follows. As a plane graph, $M^*$ is the dual plane graph of $M$. If $e=vw$ is the root-edge of $M$, then the root edge of $M^*$ is $xy$, where $x$ corresponds to the root-face of $M$, and $xy$ is defined as follows. Let $e^* = xz$ be the edge of $M^*$ crossed by $e$. Then take as $xy$ the edge following $xz$ in counter-clockwise order. Notice that in this way, the root vertex and face of $M^*$ correspond, respectively, to the root face and vertex of $M$. See Figure \ref{dual} for an example, where vertices of $M^*$ are white and edges are dashed. It is easy to check that with this definition, duality is an involution on rooted maps, that is, $M^{**}$ and $M$ are isomorphic as rooted maps. 

\begin{figure}[h]
\begin{center}
\includegraphics[scale=0.5]{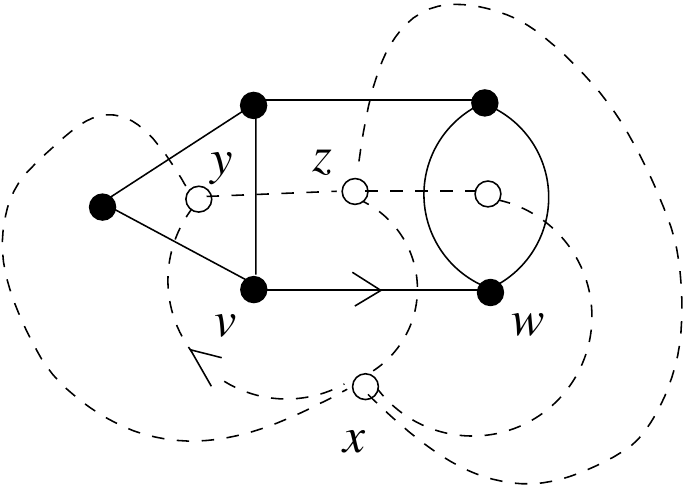}
\caption{Taking the dual map operation.}\label{dual}
\end{center}
\end{figure}

\begin{definition} A rooted map $M$ is self-dual if $M$ and $M^*$ are isomorphic. \end{definition}

Self-dual maps for three classes of planar maps were enumerated in \cite{KMN}. In particular, it was shown there that the number of self-dual rooted non-separable planar maps is given by the same formula as the number of fixed points of $h$ (see Theorem \ref{mainEnum}). For $4$ edges, the two self-dual maps are the two in the middle of Figure~\ref{maps}.

There is a natural (known) bijective map from $\beta(1,0)$-trees to rooted non-separable planar maps that we call  {\em standard} because it naturally preserves the structure of the objects involved. The map can be described as follows. Given a $\beta(1,0)$-tree, begin by assigning to each of its leaves the rooted map with one edge with the root-vertex labeled $R$ (for root-node) and the other vertex labeled $*$ (auxiliary symbol), as shown in  Figure~\ref{maps-trees}. Assume that, recursively, each child of a node $x$ in a $\beta(1,0)$-tree is assigned a map with the root-vertex labeled $R$ and the auxiliary symbol $*$ labeling a non-root node on the root-face.  To produce the map corresponding to $x$, glue the maps corresponding to its children from left to right so that the $*$ node in the first map is glued with the $R$ node in the second map; the $*$ node in the second map is glued with the $R$ node in the third map; etc (if $x$ has a single child, we do not make any gluing). Then remove the orientations from ``old" root-edge(s) and add a new root-edge from the rightmost $*$ node to the leftmost $R$ node; change the label of the rightmost $*$ to be $R$ (all other $*$s and $R$s are removed).  Finally, if the label of $x$ was $a$, label by $*$ the $a$-th node on the root-face counted from  $R$ in counter-clockwise direction. See Figure \ref{maps-trees} for an example.

\begin{figure}[h]
\begin{center}
\includegraphics[scale=0.5]{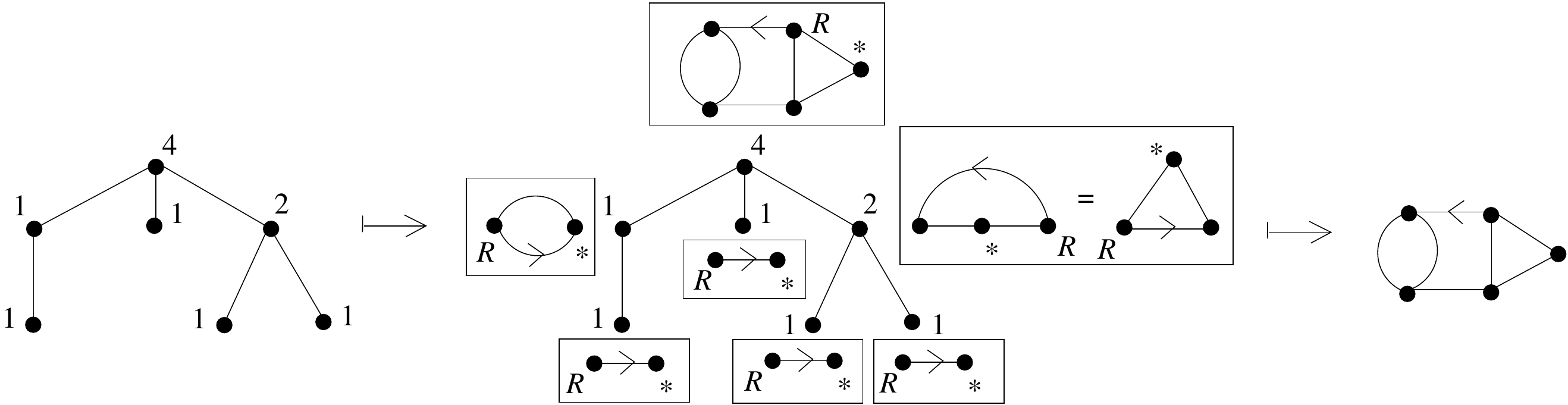}
\caption{Mapping bijectively a $\beta(1,0)$-tree to a rooted non-separable planar map.}\label{maps-trees}
\end{center}
\end{figure}

Though being equinumerous, fixed points under $h$ unfortunately do not go to fixed points under taking the dual map on rooted non-separable planar maps when applying the standard bijection. This raises the following open problem.

\begin{problem} Find a combinatorial (bijective) explanation of the fact that the number of fixed points under $h$ on $\beta(1,0)$-trees is equal to the number of fixed points under taking the dual map on rooted non-separable planar maps.\end{problem}

However, one can restrict him/herself to the standard bijection and raise the following questions.

\begin{problem} Describe the image of fixed points of $h$ under applying the standard bijection. \end{problem}

\begin{problem} Describe the image of non-separable self-dual maps under applying the reverse of the standard bijection.\end{problem}

Solving the last two problems will bring new classes of objects equinumerous with fixed points of $h$.

\section{More open bijective questions}\label{more}

%
%
%
Besides taking the dual of a non-separable map, there are several other involutions on combinatorial objects whose fixed points are known to be equinumerous with fixed points of $h$. We first review some results from~\cite{DFN}, where three classes of trees and a class of polyominoes are counted under reflection, and later we point at another connection with non-separable maps. 

Let us begin by recalling some definitions. A rooted plane (unlabeled) tree is \emph{ternary} if every node has outdegree equal to 0 or to 3, and it is \emph{even} if every node has even outdegree. A \emph{non-crossing tree} is a tree on the vertices of a convex regular polygon whose edges do not cross; moreover, a vertex of the polygon is distinguished as the root of the tree. The \emph{reflection} of a rooted plane tree is defined by recursively interchanging the order of the children at every node, whereas  for a non-crossing tree it consists in taking the image under the reflection by a bisector of the polygon through the root. 

A directed polyomino is \emph{diagonally convex} if all cells whose centers are on a line of slope $-1$ form a continous chain. The \emph{reflection} of such a polyomino is taken with respect to the line of slope $1$ through the center of the left-bottom cell. 

Ternary trees with $n$ internal nodes, even trees with $2n$ edges, non-crossing trees with $n$ edges, and diagonally convex directed polyominoes with $n$ diagonals are all  enumerated by $1/(2n+1)\binom{3n}{n}$ (see sequence A001764 in OEIS). 

An object from one of these four families that is equal to its reflection is called \emph{symmetric} (Figure~\ref{fig:symmetric} shows the case $n=3$).  Recall that $a_n$ denotes the number of fixed points of $h$ with $2n$ nodes. It  was proved in~\cite[Theorem 1]{DFN} that for odd $n$, $a_{(n+1)/2}$  is the number of
\begin{itemize}
\item symmetric ternary trees with $n$ internal nodes;
\item symmetric even trees with $2n$ edges;
\item symmetric non-crossing trees with $n$ edges;
\item symmetric diagonally convex directed polyominoes with $n$ diagonals.
\end{itemize}
For even values of $n$ the number of symmetric objects also agrees in the four cases and it equals $1/(n+1)\binom{3n/2}{n/2}$ (so, again, sequence A001764).

\begin{figure}[h]
\begin{center}
\includegraphics[height=4cm]{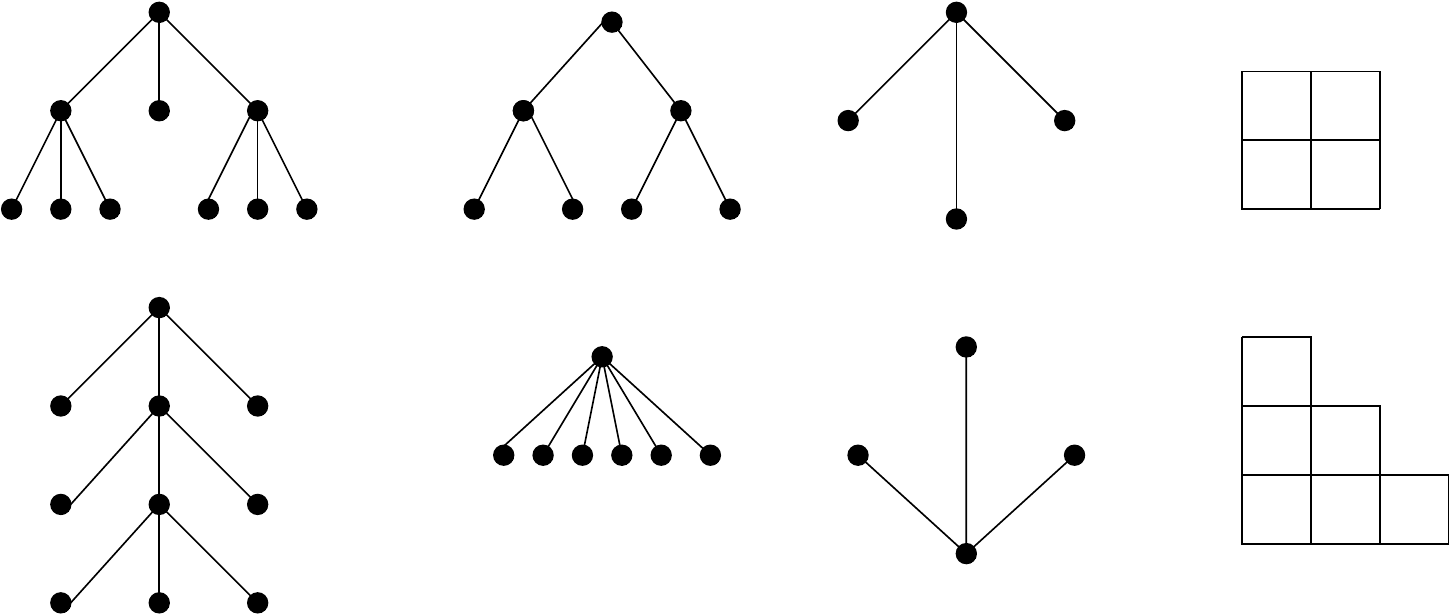}
\caption{Examples of symmetric ternary, even, and non-crossing trees, and diagonally convex directed polyominoes.}
\label{fig:symmetric}
\end{center}
\end{figure}

We conclude by mentioning another class of maps equinumerous with fixed points of $h$. In~\cite{nonsepB} (see formula 8.21), it is shown that $a_n$ also counts fixed points under a $\pi$-degree rotation of a  subclass of non-separable planar maps. More concretely, let $M$ be a  rooted planar map where the root-face has degree $2$ and the root-edge is $xy$, and let $M'$ be its image under a $\pi$-degree rotation, with the root of $M'$ being the other edge on the root-face of $M$, oriented from $y$ to $x$.  Then $a_n$ is the number of such maps  $M$ with $2n+1$ edges and such that $M$ and $M'$ are isomorphic as rooted maps.

\begin{problem} Explain bijectively (some of) the links between fixed points of $h$ and the structures discussed in this section. \end{problem}


\section{Acknowledgements}
 The authors would like to thank Marc Noy for helpful discussions related to the paper, and Anders Claesson for providing us data on fixed points of the involution $h$. The second author was supported by the Spanish and Catalan governments under Projects MTM2011-24097 and DGR2009-SGR1040.


\begin{thebibliography}{11}
\bibitem{OEIS} The On-line encyclopedia of integer sequences, published electronically at {\tt http:/$\!\!$/oeis.org}.
\bibitem{nonsepB} W. G. Brown, Enumeration of non-separable planar maps, \emph{Canad. J. Math.} \textbf{15} (1963), 526--545.
\bibitem{DFN} E. Deutsch, S. Fereti\'c, M. Noy, Diagonally convex directed polyominoes and even trees: a bijection and related issues, \emph{Disc. Math.} \textbf{256} (2002), 645--654.
\bibitem{CKS1} A. Claesson, S. Kitaev, E. Steingr\'{\i}msson, Decompositions and statistics for beta(1,0)-trees and nonseparable permutations, \emph{Adv. Appl. Math.} \textbf{42} (2009), 313--328.
\bibitem{CKS2} A. Claesson, S. Kitaev, E. Steingr\'{\i}msson, An involution on $\beta(1,0)$-trees, arXiv:1210.1608.
\bibitem{CJS} R. Cori, B. Jacquard and G. Schaeffer, Description trees for some families of planar maps, {\em Formal Power Series and Algebraic Combinatorics} (1997) 196--208. Proceedings of the 9th Conference, Vienna.
\bibitem{CS} R. Cori, G. Schaeffer: Description trees and Tutte formulas, {\em Theor. Comput. Sci.} {\bf 292(1)} (2003) 165--183.
\bibitem{Kit} S. Kitaev, Patterns in permutations and words, Springer-Verlag, 2011.
\bibitem{KMN} S. Kitaev, A. de Mier and M. Noy, On the number of self-dual rooted maps, preprint.
\bibitem{KSSU} S. Kitaev, P. Salimov, C. Severs, H. Ulfarsson: Restricted rooted non-separable planar maps, 	arXiv:1202.1790.  
\bibitem{Tutte1998} W. T. Tutte, Graph Theory As I Have Known It, Oxford University Press, New York, 1998.
\end{thebibliography}
\end{document}